\newcommand{\monic}[3]{{P_s(#1;#2)\,#3}}
\newcommand{\mpol}[2]{{P_s(#1;#2)}}
\newcommand{\monicQ}[3]{{Q_{2s}(#1;#2)\,#3}}
\newcommand{\Qpol}[2]{{Q_{2s}(#1;#2)}}
\newcommand{\R}{\mathbb{R}}
\newcommand{\K}{\mathcal{K}}
\newcommand{\M}{\mathcal{M}}
\newcommand{\tv}{\widetilde{v}}
\newcommand{\tw}{\widetilde{w}}
\newtheorem{example}[theorem]{Example}
\title{On the Forsythe conjecture\footnote{Version of September 29, 2022}}
\author{Vance Faber\footnotemark[2], J\"org Liesen\footnotemark[3] and Petr Tich\'y\footnotemark[4]}
\begin{document}

\maketitle

\renewcommand{\thefootnote}{\fnsymbol{footnote}}
\footnotetext[2]{
Hoquiam, WA 98550, USA. Email: {\tt vance.faber@gmail.com}}
\footnotetext[3]{Institute of Mathematics, TU Berlin, Stra{\ss}e des 17.~Juni 136, 10623 Berlin, Germany.  Email: {\tt liesen@math.tu-berlin.de}.}
\footnotetext[4]{
Faculty of Mathematics and Physics, Charles University, Sokolovsk\'a 83, 18675 Prague, Czech Republic. Email: {\tt petr.tichy@mff.cuni.cz}. The work of Petr~Tich\'{y} was supported by the Grant Agency of the Czech Republic under grant no. 20-01074S.
}

\begin{abstract}
Forsythe formulated a conjecture about the asymptotic behavior of the restarted conjugate gradient method in 1968. We translate several of his results into modern terms, and generalize the conjecture (originally formulated only for symmetric positive definite matrices) to symmetric and nonsymmetric matrices. Our generalization is based on a two-sided or cross iteration with the given matrix and its transpose, which is based on the projection process used in the Arnoldi (or for symmetric matrices the Lanczos) algorithm. We prove several new results about the limiting behavior of this iteration, but the conjecture still remains largely open.
\end{abstract}

\begin{keywords}
Forsythe conjecture, CG method, steepest descent method, Arnoldi projection, Krylov subspace methods
\end{keywords}

\begin{AMS}
65F10
\end{AMS}

\section{Introduction}
In his paper~\cite{For68} from 1968, Forsythe considered the ``asymptotic directions of the $s$-dimensional optimum gradient method'' or, in modern terms, the asymptotic behavior of the conjugate gradient (CG) method~\cite{HesSti52} when restarted every~$s$ steps. Based on what he could prove in the simplest case of restart length $s=1$, and what he observed numerically for $s=2$, Forsythe formulated a conjecture about the behavior of the method for general~$s$. In short, the Forsythe conjecture is that the normalized error vectors (or normalized residual vectors) of the restarted method eventually cycle back and forth between two limiting directions. This unwanted oscillatory behavior leads to slow (at most linear) convergence, which Stiefel had illustrated with his cage (``K\"afig'') already in 1952~\cite{Sti52}.

In light of the importance and wide-spread use of the CG method, surprisingly few results about the Forsythe conjecture have been published beyond the case $s=1$, and the conjecture remains largely open after more than 50 years. In addition to Forsythe in~\cite{For68}, proofs of the oscillatory behavior for $s=1$ were published by Akaike~\cite{Aka59} and Gonzaga and Schneider~\cite{GonSch16}, as well as Afanasjew, Eiermann, Ernst, and G\"uttel~\cite{AfaEieErnGue08} (for a somewhat different algorithm). The nature of the oscillations for $s=1$ were studied by Nocedal, Sartenaer, and Zhu~\cite{NocSarZhu02}. Prozanto, Wynn, and Zhigljavsky~\cite{ProWynZhi09} analyzed the rate of convergence of the $s$-dimensional optimum gradient method, re-derived several results of Forsythe in a different formulation, and numerically studied the limiting behavior particularly in the case $s=2$. This case was also analyzed by Zhuk and Bondarenko~\cite{ZhuBon83}, and we will comment on their work after the proof of our Theorem~\ref{thm:limit-degree} below.

Forsythe already suspected, and even conjectured, that the results he proved apply to a wider class of (restarted) gradient-type methods when applied to sufficiently smooth functions; see~\cite[p.~58]{For68}. And indeed, a cyclic behavior has sometimes been observed also in other restarted Krylov subspace methods, particularly when applied to symmetric or normal matrices; see, e.g.,~\cite{BakJesMan05,VecLan10}.

In this paper we generalize the Forsythe conjecture (formulated only for symmetric positive definite matrices) to symmetric and nonsymmetric matrices. Instead of the projection process underlying the CG method, we use the projection process of the Arnoldi (or for symmetric matrices the Lanczos) algorithm. The resulting \emph{Arnoldi cross iteration} alternatingly uses the given matrix $A$ and its transpose~$A^T$. While such a cross iteration usually does not solve a linear algebraic system or eigenvalue problem for~$A$, it can be of interest for analyzing the worst-case behavior of the underlying projection process. Motivated by work of Zavorin~\cite{Zav01} as well as Zavorin, O'Leary, and Elman~\cite{ZavOLeElm03} (who studied complete stagnation of the GMRES method~\cite{SaaSch86} for~$A$ and~$A^T$), we have previously introduced a \emph{GMRES cross iteration} in the context of worst-case GMRES~\cite{FabLieTic13}. In this paper we do not focus on the worst-case behavior, but we will nevertheless show that the Arnoldi cross iteration with $s=1$ solves the \emph{ideal Arnoldi problem} for real orthogonal matrices; see~\eqref{eqn:idealArnoldi} and the end of Section~\ref{sec:ACI-orth}.
Because it uses $A$ and $A^T$, the Arnoldi cross iteration for $s=1$ is related to Ostrowski's two-sided iteration~\cite{Ost59} and Parlett's Alternating Rayleigh Quotient Iteration (ARQI) for computing eigenpairs of nonsymmetric matrices~\cite{Par74}. The precise nature of these relations are yet to be explored.

For symmetric matrices the iterative steps with $A$ and~$A^T$ in the Arnoldi cross iteration coincide, and we recover essentially the same algorithm as originally studied by Forsythe, only with a different (namely the Arnoldi or the Lanczos) projection process. We conjecture that the Arnoldi cross iteration in general has the same oscillatory limiting behavior as the one conjectured by Forsythe for the restarted CG method. We point out that usually such a behavior does not exist in restarted Krylov subspace method for nonsymmetric matrices which use only $A$ (and not $A^T$); see~\cite{VecLan11}.

We generalize or extend several results from the original Forsythe formulation and from our paper~\cite{FabLieTic13} to the Arnoldi cross iteration. We also prove a few new results about the limiting behavior of this iteration. In the simplest case $s=1$ and $A=A^T$, the iteration reduces to the one studied in~\cite{AfaEieErnGue08}, and we give an alternative (and in particular simpler) proof of the limiting behavior in this case. Another new result in this paper is the proof of the limiting behavior of the Arnoldi cross iteration for $s=1$ and orthogonal matrices with eigenvalues having only positive (or only negative) real parts.

Besides being more general than the original formulation of Forsythe, we hope that our translation of the theory in the context of the conjecture into more modern notation based on projection processes is more transparent, and therefore will motivate further research beyond this paper.

The paper is organized as follows. As mentioned above, in Section~\ref{sec:forsythe} we present the original Forsythe conjecture in modern terms and discuss previous attempts to prove it. In Section~\ref{sec:arnoldi} we introduce the Arnoldi cross iteration, prove several results about this iteration and its limiting behavior, and generalize the conjecture. In Section~\ref{sec:special} we consider the Arnoldi cross iteration for symmetric and orthogonal matrices. In Section~\ref{sec:Conclusions} we give concluding remarks.

\smallskip{}
\emph{Notation.} Throughout the paper we consider only real matrices for notational simplicity. Many results can be easily extended to the complex case. The degree of the minimal polynomial of a matrix $A\in {\mathbb R}^{n\times n}$ is denoted by $d(A)$, and the grade of a vector $v\in{\mathbb R}^{n}$ with respect to $A$ by $d(A,v)$; cf.~\cite[Definition~4.2.1]{LieStr13}. For each $k\geq 1$, the $k$th Krylov subspace of $A\in\R^{n\times n}$ and $v\in\R^n$ is denoted by $\mathcal{K}_{k}(A,v):={\rm span} \{v,Av,\dots,A^{k-1}v\}$. The Euclidean norm on $\R^n$ is denoted by $\|\cdot\|$, and for a symmetric positive definite matrix $A\in \R^{n \times n}$ the $A$-norm on $\R^n$ is denoted by $\|\cdot\|_A$, where $\|v\|_A:=(v^TAv)^{1/2}$.

\section{The original Forsythe conjecture}\label{sec:forsythe}

Forsythe~\cite{For68} considered minimizing functions of the form
\begin{equation}\label{eqn:f}
f(x)=\frac12 x^TAx-x^Tb,
\end{equation}
where $A\in {\mathbb R}^{n\times n}$ is symmetric positive definite, by an iterative method he called the \emph{optimum $s$-gradient method}. We remark that Forsythe actually considered $b=0$ in~\eqref{eqn:f} for simplicity, so that the unique minimizer of~$f$ is given by $x=0$. In order to avoid any possible confusion of the reader that may be caused by studying a method for computing a solution that is already known trivially, we here consider a possibly nonzero vector~$b\in\R^n$ in~\eqref{eqn:f}. In any case, the unique minimizer of~$f$ is equal to the uniquely determined solution of the linear algebraic system $Ax=b$.

We now briefly describe Forsythe's optimum $s$-gradient method using modern notation. Let $x_0\in {\mathbb R}^n$ be a given initial vector. Then the gradient of $f$ evaluated at $x_0$ is given by
$$\nabla f(x_0)=Ax_0-b=:-r_0,$$
and the optimum 1-gradient method amounts to finding the minimum of $f$ with respect to the $A$-norm on the line $x_0-\alpha\nabla f(x_0)=x_0+\alpha r_0$, which is the \emph{line of steepest descent} of $f$ at $x_0$. Thus, using the notation of Krylov subspaces, the new iterate of the optimum 1-gradient method satisfies
\begin{equation}\label{eqn:CG1}
x_1\in x_0+{\rm span}\{r_0\}=x_0+\K_1(A,r_0),
\end{equation}
and it is found so that
$$f(x_1) = \min_{z\in x_0+\K_1(A,r_0)} f(z).$$
Since $\|x-x_1\|_A^2=2 f(x_1)+x^TAx$, solving the minimization problem for~$f$ is equivalent with finding~$x_1$ so that
$$\|x-x_1\|_A=\min_{z\in x_0+\K_1(A,r_0)}\|x-z\|_A.$$
It is well known (see, e.g.,~\cite[Theorem~2.3.1]{LieStr13}) that~$x_1$ is uniquely determined by the orthogonality condition
\begin{equation}\label{eqn:CG2}
x-x_1\perp_A \K_1(A,r_0).
\end{equation}
The equations \eqref{eqn:CG1}--\eqref{eqn:CG2} give a complete mathematical characterization of the optimum 1-gradient method.

For each $z=x_0+\alpha r_0\in x_0+\K_1(A,r_0)$ we have
$$\nabla f(z)= A(x_0+\alpha r_0)-b=-r_0+\alpha Ar_0.$$
Forsythe called the set
$$\{x_0+\alpha_1 r_0+\alpha_2 Ar_0\,:\,\alpha_1,\alpha_2\in {\mathbb R}\}
= x_0+\K_2(A,r_0)$$
the \emph{$2$-plane of steepest descent} of $f$ at $x_0$. Analogously, the \emph{$s$-dimensional plane of steepest descent} of $f$ at $x_0$ is given by
$$\Big\{x_0+\sum_{j=0}^{s-1} \alpha_j A^jr_0\,:\,\alpha_j\in {\mathbb R},\,j=0,1,\dots,s-1\Big\}
= x_0+\K_s(A,r_0).$$
In analogy with \eqref{eqn:CG1}--\eqref{eqn:CG2}, the iterate $x_1$ of the optimum $s$-gradient method is then defined by the relations
\begin{equation}\label{eqn:forsythe}
x_1\in x_0+\K_s(A,r_0)\quad\mbox{such that}\quad x-x_1\perp_A \K_s(A,r_0).
\end{equation}
This is nothing but the mathematical characterization of the $s$th iterate of the CG method applied to $Ax=b$ with the initial vector $x_0$; see, e.g.,~\cite[Theorem~2.3.1]{LieStr13}. Forsythe was of course aware that the method he considered is mathematically equivalent to the CG method. He also pointed out that the implementation of Hestenes and Stiefel ``in practice ... may usually be superior to the optimum $s$-gradient methods''~\cite[p.~58]{For68}.

Forsythe was interested in the behavior of the iterates in the optimum $s$-gradient method when it is applied multiple times or, in modern terms, \emph{restarted}. It is well known that the restarted method converges to the uniquely determined minimizer of~$f$, given by $x=A^{-1}b$. The interesting question in this context is \emph{from which directions} the iterates approach their limit.

In order to study this behavior, one considers an integer~$s$ with $1\leq s< d(A)$, an initial vector $x_0\in {\mathbb R}^{n}$ with $d(A,x_0)\geq s+1$, and a sequence of vectors constructed using \eqref{eqn:forsythe} with an additional normalization:
\begin{align}
& \mbox{For \ensuremath{k=0,1,2,\dots}}\nonumber\\
& \hspace{1cm}\mbox{$y_k=r_k/\|r_k\|$,}\label{eqn:yk}\\
& \hspace{1cm}\mbox{$x_{k+1}\in x_k+\K_s(A,r_k)$
such that $x-x_{k+1}\perp_A \K_s(A,r_k)$.}\label{eqn:forsythe1}
\end{align}

For the case $s=1$, and hence the steepest descent method, Forsythe and Motzkin had conjectured already in 1951 that the two sequences of normalized vectors with even and odd indices, i.e., $\{y_{2k}\}$ and $\{y_{2k+1}\}$, alternate asymptotically between two limit vectors that are determined by the eigenvectors of $A$ corresponding to its smallest and its largest eigenvalue. In the words of Forsythe, ``the iteration behaves asymptotically, as $k\rightarrow\infty$, as though it were entirely in the two-space $\pi_{1,n}$''~\cite[p.~64]{For68}. Since asymptotically the vectors with the even indices become arbitrarily close to being collinear, and the same happens for the vectors with the odd indices, the error norms of the non-normalized restarted iteration can converge to zero (only) linearly; see~\cite[pp.~63--64]{For68} for Forsythe's original discussion of this important observation.

The conjecture of Forsythe and Motzkin (for $s=1$) was proven by Akaike in 1959 using methods from probability theory~\cite{Aka59}, and Forsythe gave another proof in~\cite{For68} using orthogonal polynomials. Based on numerical evidence he suspected that the behavior of the optimum $s$-gradient method is similar for all $s$, and we can therefore state his conjecture from~\cite[p.~66]{For68} as follows:

\medskip
{\sc Forsythe conjecture.} \emph{For $2\leq s<d(A)$, each of the two subsequences $\{y_{2k}\}$ and $\{y_{2k+1}\}$ in \eqref{eqn:yk}--\eqref{eqn:forsythe1} has a single limit vector.}

\section{The Arnoldi cross iteration}\label{sec:arnoldi}

Since the (oblique) projection process~\eqref{eqn:forsythe1} on which the Forsythe conjecture is based uses the $A$-inner product, the matrix $A$ must be symmetric positive definite. Our goal in this section is to obtain a generalization of the Forsythe conjecture to symmetric and nonsymmetric matrices that is as straightforward as possible. We therefore consider a closely related projection process, which is also well known in the area of Krylov subspace methods, and which is well defined for general~$A$.

Given $A\in\mathbb{R}^{n\times n}$, an integer $s\geq1$, and a vector $v\in\mathbb{R}^{n}$, we define the vector $w\in\mathbb{R}^{n}$ such that
\begin{equation}\label{eqn:arnoldi}
w\in A^{s}v+\mathcal{K}_{s}(A,v)\quad\mbox{and}\quad w\perp\mathcal{K}_{s}(A,v).
\end{equation}
The construction of the vector $w$ is the $s$th step in the orthogonalization of the Krylov sequence $v,Av,A^{2}v,\dots$, where the vector $A^{s}v$ is orthogonalized with respect to the previous vectors $v,\dots,A^{s-1}v$. Since the standard method for computing orthogonal Krylov subspace bases is the Arnoldi algorithm~\cite{Arn51}, we call $w$ the \emph{Arnoldi projection} of~$v$ with respect to $A$ and $s$.

By construction, $w=p(A)v$ for some polynomial $p\in\mathcal{M}_{s}$, which is the set of (real) monic polynomials of degree~$s$. Further basic properties of the Arnoldi projection are shown in the next result.
\medskip{}

\begin{lemma}\label{lem:basic}
If $A\in\mathbb{R}^{n\times n}$, $1\leq s<d(A)$, $v\in\mathbb{R}^{n}$, and $w$ is given by \eqref{eqn:arnoldi}, then the following
hold:
\begin{itemize}
\item[(1)] The vector $w$ satisfies
\[
\|w\|=\min_{z\in\mathcal{K}_{s}(A,v)}\|A^{s}v+z\|=\min_{q\in\mathcal{M}_{s}}\|q(A)v\|,
\]
and hence $w=0$ if and only if $d(A,v)\leq s$.
\item[(2)] If $d(A,v)\geq s$, then $w=\monic{A}{v}{v}$ for a uniquely determined polynomial
$\mpol{z}{v} \in\mathcal{M}_{s}$.
\end{itemize}
\end{lemma}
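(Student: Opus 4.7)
The plan is to unpack the defining conditions on $w$ as a standard orthogonal-projection characterization and then translate the resulting minimum into polynomial language. Writing $w = A^s v + z^\ast$ with $z^\ast\in\mathcal{K}_s(A,v)$ and $w\perp\mathcal{K}_s(A,v)$, the vector $w$ is precisely the component of $A^s v$ orthogonal to $\mathcal{K}_s(A,v)$, which exists and is unique by elementary Hilbert-space theory.

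For the first equality in (1), I would take an arbitrary $z\in\mathcal{K}_s(A,v)$ and write
\[
A^s v+z \;=\; w+(z-z^\ast),
\]
noting $z-z^\ast\in\mathcal{K}_s(A,v)\perp w$. Pythagoras gives $\|A^s v+z\|^2=\|w\|^2+\|z-z^\ast\|^2\geq\|w\|^2$ with equality iff $z=z^\ast$. The second equality is the obvious bijection between $\mathcal{K}_s(A,v)=\mathrm{span}\{v,Av,\dots,A^{s-1}v\}$ and polynomials of degree $<s$: writing $z=r(A)v$ with $\deg r\leq s-1$ gives $A^s v+z=q(A)v$ for the monic polynomial $q(\lambda)=\lambda^s+r(\lambda)\in\mathcal{M}_s$, and conversely.

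For the equivalence $w=0\Leftrightarrow d(A,v)\leq s$, I would argue from the polynomial form of the minimum. If $m:=d(A,v)\leq s$ and $p_v\in\mathcal{M}_m$ is the minimal polynomial of $v$ with respect to $A$, then $\lambda^{s-m}p_v(\lambda)\in\mathcal{M}_s$ annihilates $v$, so the minimum over $\mathcal{M}_s$ is zero, forcing $w=0$. Conversely, $w=0$ exhibits a monic polynomial of degree $s$ that annihilates $v$, so $d(A,v)\leq s$.

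For (2), existence of the representing monic polynomial is immediate from the polynomial form of~(1): $w=q(A)v$ for some $q\in\mathcal{M}_s$, and this $q$ will be named $\mpol{\cdot}{v}$. For uniqueness, suppose $p(A)v=q(A)v$ with $p,q\in\mathcal{M}_s$; the difference $p-q$ has degree at most $s-1$ and annihilates $v$, which is incompatible with $d(A,v)\geq s$ unless $p=q$. I do not anticipate any genuine obstacle: the whole lemma is bookkeeping with orthogonal projection onto a Krylov subspace, the one subtle point being that uniqueness in~(2) genuinely uses the grade hypothesis $d(A,v)\geq s$, which is strictly stronger than the overall assumption $1\leq s<d(A)$.
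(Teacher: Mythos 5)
Your proposal is correct and follows essentially the same route as the paper: the Pythagorean decomposition $A^s v+z=w+(z-z^\ast)$ for the minimization property, the identification of $A^s v+\mathcal{K}_s(A,v)$ with $\{q(A)v: q\in\mathcal{M}_s\}$, and the degree argument on $p-q$ for uniqueness in (2). You spell out the equivalence $w=0\Leftrightarrow d(A,v)\leq s$ in slightly more detail than the paper does, but the substance is identical.
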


\begin{proof}
(1) By construction, $w=A^{s}v+u$ for some $u\in\mathcal{K}_{s}(A,v)$,
and hence for any $z\in\mathcal{K}_{s}(A,v)$ we obtain
\begin{align*}
\|A^{s}v+z\|^{2}=\|w-(u-z)\|^{2}=\|w\|^{2}+\|u-z\|^{2}\geq\|w\|^{2},
\end{align*}
with equality if and only if $z=u$. This proves the minimization
property of $w$. From $\|w\|=\min_{q\in\mathcal{M}_{s}}\|q(A)v\|$
we see that $w=0$ holds if and only if $d(A,v)\leq s$.

(2) Suppose that $d(A,v)\geq s$. If $w=p(A)v=q(A)v$ with $p,q\in\mathcal{M}_{s}$,
then $(p-q)(A)v=0$. Since the polynomials $p$ and $q$ are both
monic, the polynomial $p-q$ has degree at most $s-1$. But then $d(A,v)\geq s$
implies $p-q=0$.
\end{proof}
\medskip{}

In order to generalize the Forsythe conjecture to nonsymmetric matrices we will focus on the limiting behavior of a sequence of vectors obtained from repeatedly computing~\eqref{eqn:arnoldi} alternatingly with $A$ and $A^T$, and with additional normalizations.
The algorithm we consider here is similar to~\cite[Algorithm~1]{FabLieTic13}, but based on the Arnoldi projection~\eqref{eqn:arnoldi} instead of the projection process that is used in the GMRES method.

Given $A\in\mathbb{R}^{n\times n}$, an integer $s$ with $1\leq s<d(A)$, and a vector $v_{0}\in\mathbb{R}^{n}$ with $\|v_0\|=1$ and $d(A,v_0)\geq s+1$, we consider the following algorithm:
\begin{align}
 & \mbox{For \ensuremath{k=0,1,2,\dots}}\nonumber \\
 & \hspace{1cm}\mbox{\ensuremath{\tw_{k}=\monic{A}{v_k}{v_k}},}\label{eqn:twk}\\
 & \hspace{1cm}\mbox{\ensuremath{w_{k}=\tw_{k}/\|\tw_{k}\|},}\label{eqn:wk}\\
 & \hspace{1cm}\mbox{\ensuremath{\tv_{k+1}=\monic{A^T}{w_k}{w_k}},}\label{eqn:tvk}\\
 & \hspace{1cm}\mbox{\ensuremath{v_{k+1}=\tv_{k+1}/\|\tv_{k+1}\|}.}\label{eqn:vkp1}
\end{align}
We call this algorithm the \emph{Arnoldi cross iteration with restart length~$s$}, or shortly \emph{ACI($s$)}.
Because of the mathematical similarities between the ACI($s$) and the GMRES cross iteration in~\cite{FabLieTic13}, we expect that their limiting behavior is similar.

We will now transfer some of the results about the GMRES cross iteration in~\cite[Section~2]{FabLieTic13} to the ACI($s$). That paper is about worst-case GMRES, and here we need an analogous definition for the worst case in the projection process \eqref{eqn:arnoldi}.
\medskip{}

\begin{definition}
Let $A\in\mathbb{R}^{n\times n}$ and an integer $s$ with $1\leq s<d(A)$ be given.
Denote
\begin{equation}\label{eq:wcArnoldi}
\Phi_s(A):=\max_{\substack{v\in\R^n\\ \|v\|=1}}\min_{p\in\M_s}\|p(A)v\|
= \max_{\substack{v\in\R^n\\ \|v\|=1}}
\|\monic{A}{v}{v}\| \,.
\end{equation}
A unit norm vector $v\in\R^n$ and a corresponding monic polynomial $P_s(z;v)$
for which the value $\Phi_s(A)$ is attained are called a worst-case Arnoldi vector
and a worst-case Arnoldi polynomial for $A$ and $s$, respectively.
\end{definition}
\medskip{}

Since a worst-case vector or the corresponding worst-case polynomial for the GMRES method need not be unique in general (see \cite[Theorem~4.1]{FabLieTic13}), we expect the same to be true for the worst-case Arnoldi problem \eqref{eq:wcArnoldi}. Note that in general we have
\begin{equation}\label{eqn:idealArnoldi}
\Phi_s(A)=\max_{\substack{v\in\R^n\\ \|v\|=1}}
\min_{p\in\M_s}\|p(A)v\| \leq \min_{p\in\M_s} \max_{\substack{v\in\R^n\\ \|v\|=1}}\|p(A)v\|=
\min_{p\in\M_s} \|p(A)\|.
\end{equation}
The expression on the right hand side is called the \emph{ideal Arnoldi problem} for~$A$ and~$s$; see \cite{GreTre94}. If equality holds in \eqref{eqn:idealArnoldi}, then the worst-case Arnoldi polynomial in~\eqref{eq:wcArnoldi} is unique; see~\cite[Lemma~2.4]{TicLieFab07} and~\cite{LieTic09}.

The next result transfers~\cite[Theorem~2.2]{FabLieTic13} and the first part of~\cite[Theorem~2.5]{FabLieTic13} to the ACI($s$).
\medskip{}

\begin{theorem}\label{thm:monotone}
Let $A\in\mathbb{R}^{n\times n}$, let $s$ be an integer with $1\leq s<d(A)$, and let $v_{0}\in\mathbb{R}^{n}$ be such that $\|v_0\|=1$ and $d(A,v_{0})\geq s+1$. Then the vectors in \eqref{eqn:twk}--\eqref{eqn:vkp1} are all well defined, and
\begin{equation}\label{eqn:interlace}
\|\tw_k\|\leq\|\tv_{k+1}\|\leq\|\tw_{k+1}\|\leq\|\tv_{k+2}\|\leq \Phi_s(A^T)=\Phi_s(A),\quad k=0,1,2,\dots.
\end{equation}
Equality holds in the first inequality if and only if $v_{k}=\alpha v_{k+1}$ for some $\alpha\neq0$, and in the second if and only if $w_{k}=\beta w_{k+1}$ for some $\beta\neq0$.
\end{theorem}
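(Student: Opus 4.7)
The plan is to base the entire proof on a single algebraic identity, from which well-definedness, the interlacing chain, the upper bound, and the equality characterization all follow with little additional work. Interpreting $\tv_{k+1}$ and $\tw_{k+1}$ as the uniquely determined orthogonal projection residuals of~\eqref{eqn:arnoldi} (which are well defined even if the corresponding monic polynomial is not unique), the identity I aim to prove is
\[
\langle v_k,\tv_{k+1}\rangle \;=\; \|\tw_k\|, \qquad \langle w_k,\tw_{k+1}\rangle \;=\; \|\tv_{k+1}\|.
\]
To derive the first equation, observe that $\tw_k\perp \K_s(A,v_k)$ by construction, and by transposition this implies $v_k\perp \K_s(A^T,w_k)$ (since $w_k$ is a positive scalar multiple of $\tw_k$). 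Writing $\tv_{k+1}=(A^T)^s w_k + u$ with $u\in \K_s(A^T,w_k)$ and $\tw_k = A^s v_k + u'$ with $u'\in\K_s(A,v_k)$, the two orthogonalities $v_k\perp u$ and $u'\perp w_k$ give
$\langle v_k,\tv_{k+1}\rangle = \langle v_k,(A^T)^s w_k\rangle = \langle A^s v_k,w_k\rangle = \langle \tw_k,w_k\rangle = \|\tw_k\|$.
The second identity follows by the completely symmetric argument with $A\leftrightarrow A^T$ and $v\leftrightarrow w$ interchanged.

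With the identities in hand I proceed by induction on $k$. The hypothesis $d(A,v_0)\ge s+1$ and Lemma~\ref{lem:basic} make $\tw_0$ nonzero and hence $w_0$ well defined. The identity then yields $\langle v_0,\tv_1\rangle=\|\tw_0\|>0$, so $\tv_1\ne 0$, which by Lemma~\ref{lem:basic}(1) is equivalent to $d(A^T,w_0)\ge s+1$ and makes $v_1$ well defined; the second identity analogously produces $d(A,v_1)\ge s+1$, closing the induction. Cauchy--Schwarz applied to the identity immediately gives the interlacing~\eqref{eqn:interlace}:
$\|\tw_k\|=\langle v_k,\tv_{k+1}\rangle\le\|v_k\|\,\|\tv_{k+1}\|=\|\tv_{k+1}\|$,
and analogously $\|\tv_{k+1}\|\le\|\tw_{k+1}\|$. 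The bounds $\|\tw_{k+1}\|\le\Phi_s(A)$ and $\|\tv_{k+2}\|\le\Phi_s(A^T)$ are immediate from~\eqref{eq:wcArnoldi} since $v_{k+1}$ and $w_{k+1}$ are unit vectors, and the equality $\Phi_s(A^T)=\Phi_s(A)$ is obtained by running the same interlacing argument from a worst-case vector $v_0^\star$ for $\Phi_s(A)$ (so that $\Phi_s(A)=\|\tw_0\|\le\|\tv_1\|\le\Phi_s(A^T)$), together with the symmetric variant for the reverse inequality.

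Finally, equality in the Cauchy--Schwarz step behind $\|\tw_k\|\le\|\tv_{k+1}\|$ is equivalent to $\tv_{k+1}$ being a positive scalar multiple of $v_k$ (positive, because the inner product equals the positive quantity $\|\tw_k\|$), which after normalization is equivalent to $v_k=\alpha v_{k+1}$ for some $\alpha\ne 0$; the converse is an immediate computation combining $\langle v_{k+1},\tv_{k+1}\rangle=\|\tv_{k+1}\|$ with the identity and the unit-norm constraints. The equality case for $\|\tv_{k+1}\|\le\|\tw_{k+1}\|$ is the analogous statement in terms of $w_k,w_{k+1}$. I expect no substantive obstacle beyond formulating the key identity cleanly: the delicate point is that one cannot presuppose $\tv_{k+1}\ne 0$ at the start of the induction, which is why the identity must be derived using only the orthogonality relations from~\eqref{eqn:arnoldi}, after which the nonvanishing of every iterate, and hence well-definedness of the whole iteration, is read off from the identity itself.
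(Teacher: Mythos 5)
Your proof is correct and follows essentially the same route as the paper's: the central identity $\|\tw_k\|=\langle v_k,\tv_{k+1}\rangle$ (obtained there by writing $\|\tw_k\|=\langle v_k,q(A^T)w_k\rangle$ for arbitrary monic $q$ and specializing $q=\mpol{z}{w_k}$), Cauchy--Schwarz for the interlacing and the equality characterization, and running the same inequality from a worst-case vector to get $\Phi_s(A)=\Phi_s(A^T)$. Your explicit use of the projection-residual interpretation to define $\tv_{k+1}$ before knowing $d(A^T,w_k)\geq s+1$ is a clean way to handle the same well-definedness subtlety the paper resolves via the ``arbitrary $q$'' formulation.
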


\begin{proof}
We start by showing inductively that the vectors in \eqref{eqn:twk}--\eqref{eqn:vkp1} are all well defined and satisfy \eqref{eqn:interlace}. Suppose that for some $k\geq 0$ we have $\tw_{k}=\monic{A}{v_k}{v_k}\neq 0$ for some uniquely determined polynomial $\mpol{z}{v_k}\in\M_s$, so that $w_{k}=\tw_{k}/\|\tw_{k}\|$ is well defined. By construction we have $\tw_{k}\perp\mathcal{K}_{s}(A,v_{k})$, and therefore
\begin{align*}
\|\tw_{k}\|^{2} =\langle \monic{A}{v_k}{v_k},\tw_{k}\rangle=\langle A^{s}v_{k},\tw_{k}\rangle=
\langle q(A)v_{k},\tw_{k}\rangle
&=\langle v_{k},q(A^T)\tw_{k}\rangle,
\end{align*}
where $q\in\mathcal{M}_{s}$ is arbitrary. Using $w_{k}=\tw_{k}/\|\tw_{k}\|$ we obtain
\begin{align}\label{eqn:orth1}
\|\tw_{k}\| =
\langle v_{k},q(A^T)w_{k}\rangle.
\end{align}
Since $0<\|\tw_{k}\|$ and $q\in\mathcal{M}_{s}$ is arbitrary, we have $d(A^T,w_k)\geq s+1$. Using Lemma~\ref{lem:basic},
there is a unique polynomial $\mpol{z}{w_k} \in \M_s$ such that
$\tv_{k+1}=\monic{A^T}{w_k}{w_k} \neq 0$, and, using \eqref{eqn:orth1}
and choosing $q=\mpol{z}{w_k}$ yields
\begin{align}\label{eqn:ineq1}
\|\tw_{k}\| = \langle v_{k},\tv_{k+1}\rangle \leq\|\tv_{k+1}\|,
\end{align}
where we have used the Cauchy-Schwarz inequality and $\|v_{k}\|=1$.
Therefore, $v_{k+1} = \tv_{k+1}/\|\tv_{k+1}\|$ is well defined.

Similarly, we can prove that $\|\tv_{k+1}\|=\langle w_{k},q(A) v_{k+1}\rangle$
where $q\in\mathcal{M}_{s}$ is arbitrary.
Since $0<\|\tv_{k+1}\|$, we have $d(A,v_{k+1})\geq s+1$, and it holds that
\begin{align}\label{eqn:ineq1a}
\|\tv_{k+1}\|
 =\langle w_{k},\tw_{k+1}\rangle \leq \| \tw_{k+1} \|.
\end{align}
By induction, all vectors in  \eqref{eqn:twk}--\eqref{eqn:vkp1} are well defined
and satisfy \eqref{eqn:interlace}.

Moreover, since we have used the Cauchy-Schwarz inequality to obtain \eqref{eqn:ineq1}, we have $\|\tw_{k}\|\leq\|\tv_{k+1}\|$ with equality if and only if $v_{k}$ and $\tv_{k+1}$ are linearly dependent, or equivalently $v_{k}=\alpha v_{k+1}$ for some $\alpha\neq0$. Similarly, the equality in \eqref{eqn:ineq1a} holds
if and only if $w_k$ and $\tw_{k+1}$ are linearly dependent, or equivalently $w_k=\beta w_{k+1}$ for some $\beta\neq 0$.

It is obvious that $\| \tw_{k} \| \leq \Phi_s(A)$
and $\| \tv_{k} \| \leq \Phi_s(A^T)$ for each $k$. We only need to prove that
$\Phi_s(A) = \Phi_s(A^T)$.
Let $v$ be a unit norm worst-case Arnoldi vector for $A$ and $s$, and denote $\widetilde{w}=\monic{A}{v}{v}$ and $w\equiv \widetilde{w}/\|\widetilde{w}\|$,
so that $\|\widetilde{w}\| = \Phi_s(A)$. Then
\begin{align*}
\Phi_s(A)
= \left\langle \monic{A}{v}{v}, w \right\rangle
= \left\langle v, q(A^T) w \right\rangle
\leq
\min_{p\in \M_s} \| p(A^T) w \|
\leq
\Phi_s(A^T),
\end{align*}
where $q\in\mathcal{M}_{s}$ is arbitrary. Exchanging the roles of $A$ and $A^T$ shows the reverse inequality and completes the proof.
\end{proof}
\medskip{}

As shown in \eqref{eqn:interlace}, the two nondecreasing sequences $\{\|\tw_k\|\}$ and $\{\|\tv_k\|\}$ interlace each other and are both bounded from above by $\Phi_s(A)=\Phi_s(A^T)<\infty$. Thus, the sequences converge to the same limit,
\begin{align}\label{eqn:same-limit}
\|\tw_k\|\rightarrow\tau\quad\mbox{and}\quad \|\tv_k\|\rightarrow\tau\quad \mbox{for $k\rightarrow\infty$,}
\end{align}
where $\tau\leq \Phi_s(A)$.
This observation is helpful for proving the next result, which is the ACI($s$) version of the second part of~\cite[Theorem~2.5]{FabLieTic13}.
\medskip{}

\begin{theorem}\label{thm:evenodd}
Let $A\in\mathbb{R}^{n\times n}$, let $s$ be an integer with $1\leq s<d(A)$, and let $v_{0}\in\mathbb{R}^{n}$ be such that $\|v_0\|=1$ and $d(A,v_{0})\geq s+1$. Then the sequences of the normalized vectors in \eqref{eqn:twk}--\eqref{eqn:vkp1} satisfy
\[
\lim_{k\rightarrow\infty}\|w_{k+1}-w_{k}\|=0\quad\mbox{and}\quad\lim_{k\rightarrow\infty}\|v_{k+1}-v_{k}\|=0.
\]
\end{theorem}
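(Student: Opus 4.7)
The plan is to reduce both limits to the convergence \eqref{eqn:same-limit} by expressing the inner products $\langle v_k, v_{k+1}\rangle$ and $\langle w_k, w_{k+1}\rangle$ as quotients of the norms that appear in the interlaced sequence \eqref{eqn:interlace}. The key observation, already embedded in the proof of Theorem~\ref{thm:monotone}, is the pair of identities
\[
\|\tw_k\| = \langle v_k, \tv_{k+1}\rangle \quad\text{and}\quad \|\tv_{k+1}\| = \langle w_k, \tw_{k+1}\rangle,
\]
obtained by taking $q = \mpol{z}{w_k}$ (respectively $q = \mpol{z}{v_{k+1}}$) in the general identity derived via $\tw_k \perp \K_s(A,v_k)$ and $\tv_{k+1}\perp\K_s(A^T,w_k)$.

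After dividing by the appropriate norms to replace tilded vectors by their normalized counterparts, these identities rewrite as
\[
\langle v_k, v_{k+1}\rangle = \frac{\|\tw_k\|}{\|\tv_{k+1}\|} \quad\text{and}\quad \langle w_k, w_{k+1}\rangle = \frac{\|\tv_{k+1}\|}{\|\tw_{k+1}\|}.
\]
Using $\|v_k\| = \|v_{k+1}\| = 1$ (and analogously for the $w$-vectors), I then expand
\[
\|v_{k+1}-v_k\|^2 = 2\bigl(1 - \langle v_k, v_{k+1}\rangle\bigr) = 2\Bigl(1 - \tfrac{\|\tw_k\|}{\|\tv_{k+1}\|}\Bigr),
\]
and similarly $\|w_{k+1}-w_k\|^2 = 2(1 - \|\tv_{k+1}\|/\|\tw_{k+1}\|)$.

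To finish, I invoke \eqref{eqn:same-limit}: both $\|\tw_k\|$ and $\|\tv_k\|$ converge to a common limit $\tau$. The only subtlety—and the single place where one must be a bit careful—is to check that $\tau > 0$ so that the ratios are well defined and tend to $1$. But since $d(A,v_0) \geq s+1 > s$, Lemma~\ref{lem:basic}(1) gives $\|\tw_0\| > 0$, and by the monotonicity in \eqref{eqn:interlace} we have $\tau \geq \|\tw_0\| > 0$. Consequently both ratios tend to $\tau/\tau = 1$, which forces $\|v_{k+1}-v_k\| \to 0$ and $\|w_{k+1}-w_k\| \to 0$. The main step was really the algebraic reformulation of the inner products; no estimate beyond Cauchy--Schwarz (already used in Theorem~\ref{thm:monotone}) and the positivity of $\tau$ is needed.
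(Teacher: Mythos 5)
Your proposal is correct and follows essentially the same route as the paper: both reduce $\|v_{k+1}-v_k\|^2$ (and its $w$-analogue) to $2\bigl(1-\|\tw_k\|/\|\tv_{k+1}\|\bigr)$ via the orthogonality identities from the proof of Theorem~\ref{thm:monotone}, and then invoke the common limit in \eqref{eqn:same-limit}. Your explicit check that $\tau\geq\|\tw_0\|>0$ is a small but welcome addition that the paper leaves implicit.
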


\begin{proof}
Using similar ideas as in the proof of Theorem~\ref{thm:monotone}, for any $k\geq0$ we have
\begin{align*}
\frac{1}{2}\|v_{k+1}-v_{k}\|^{2} & =1-\langle v_{k+1},v_{k}\rangle\\
 & =1-\frac{1}{\|\tv_{k+1}\|}\langle \monic{A^T}{w_k}{w_k},v_k\rangle\\
 & =1-\frac{1}{\|\tv_{k+1}\|}\langle w_k,q(A) v_k\rangle\qquad\mbox{(for any \ensuremath{q\in\M_s})}\\
 & =1-\frac{1}{\|\tv_{k+1}\|}\langle w_k,\monic{A}{v_k}{v_k}\rangle\\
 & =1-\frac{1}{\|\tv_{k+1}\|}\langle w_k,\tw_k\rangle\\
 & =1-\frac{\|\tw_k\|}{\|\tv_{k+1}\|},
\end{align*}
where we have used $w_k\perp \K_s(A,v_k)$.
Since the sequences $\{\|\tv_k\|\}$ and $\{\|\tw_k\|\}$ converge to the same limit for $k\rightarrow\infty$, we have $\frac{\|\tw_k\|}{\|\tv_{k+1}\|}\rightarrow 1$ and hence
$\|v_{k+1}-v_{k}\|\rightarrow 0$ for $k\rightarrow\infty$. The proof for the sequence $\{w_k\}$ is analogous.
\end{proof}
\medskip{}

In the following we will focus on the properties of sequence $\{v_k\}$, with the understanding that analogous results can be formulated also for the sequence $\{w_k\}$. The unit norm vectors $v_{k}$ form a bounded sequence in ${\mathbb{R}}^{n}$. By the Bolzano-Weierstrass Theorem this sequence has a convergent subsequence, and thus it has limit vectors. The existence of a single limit vector of the sequence $\{y_{2k}\}$ in the iteration \eqref{eqn:yk}--\eqref{eqn:forsythe1} is the content of the original Forsythe conjecture (see Section~\ref{sec:forsythe}), and here we formulate the same conjecture for the ACI($s$):
\medskip{}

\textsc{ACI($s$) conjecture.} \emph{Let $A\in\mathbb{R}^{n\times n}$, let $s$ be an integer with $1\leq s<d(A)$, and let $v_{0}\in\mathbb{R}^{n}$ be such that $\|v_0\|=1$ and $d(A,v_{0})\geq s+1$. Then the sequence $\{v_{k}\}$ in \eqref{eqn:twk}--\eqref{eqn:vkp1} has a single limit vector.} \medskip{}

As shown in Theorem~\ref{thm:evenodd}, the (Euclidean) distance between consecutive vectors of the sequence $\{v_k\}$ shrinks to zero for $k\rightarrow\infty$. Because of this property it may be difficult to find a counterexample for the conjecture numerically. On the other hand, this property is not sufficient for the existence of a single limit vector. For example, the complex points $\mu_k=e^{\mathbf{i} s_k\pi}$, where $s_k=\sum_{j=1}^k 1/j$ for $k\geq 1$, satisfy
$$|\mu_k-\mu_{k+1}|=|1-e^{\mathbf{i} \pi/(k+1)}|\rightarrow 0\quad\mbox{for}\quad k\rightarrow\infty,$$
but the sequence $\{\mu_k\}$ does not converge, since $\{s_k\}$ diverges.

Using a similar notation as Forsythe in~\cite{For68}, we define for the given (possibly nonsymmetric) matrix $A\in\R^{n\times n}$ and integer $s$, where $1\leq s<d(A)$, the set
\[
\Sigma^{A}:=\{v\in\mathbb{R}^{n}\,:\,\|v\|=1\;\mbox{and}\;d(A,v)\geq s+1\},
\]
and the transformation
\begin{align}
T_{A}:\Sigma^{A}\rightarrow\Sigma^{A},\quad v\mapsto T_{A}(v):=\frac{\monic{A}{v}{v}}{\|\monic{A}{v}{v}\|}.
\end{align}
In the notation $\Sigma^A$ and $T_A$ we have suppressed the dependence on~$s$ for simplicity. However, for the analysis of the iteration \eqref{eqn:twk}--\eqref{eqn:vkp1}, it is important to explicitly consider the dependence on the matrix, since we operate with both $A$ and $A^T$.

In terms of \eqref{eqn:twk}--\eqref{eqn:vkp1}, if $v_0\in\Sigma^A$, then
$$v_{k+1}=T_{A^T}(T_{A}(v_{k}))\quad\mbox{for all $k\geq0$,}$$
where Theorem~\ref{thm:monotone} shows that the transformation $T_{A^T}\circ T_A:\Sigma^A\rightarrow\Sigma^A$ is indeed well defined. Moreover, both $T_{A^T}$ and $T_A$ are continuous, so that $T_{A^T}\circ T_{A}$ is also continuous.
The next result adapts~\cite[Theorems~3.8]{For68} to our context.
\medskip{}

\begin{theorem}\label{thm:limit-set}
Let $A\in\mathbb{R}^{n\times n}$, let $s$ be an integer with $1\leq s<d(A)$, and let $v_{0}\in\mathbb{R}^{n}$ be such that $\|v_0\|=1$ and $d(A,v_{0})\geq s+1$. Then the set
$\Sigma_{*}^{A}$ of limit vectors of the sequence $\{v_{k}\}$ in \eqref{eqn:twk}--\eqref{eqn:vkp1} satisfies:
\begin{itemize}
\item[(1)] $\Sigma_{*}^{A}$ is a closed and connected set in $\mathbb{R}^{n}$.
\item[(2)] $\Sigma_{*}^{A}\subseteq\Sigma^{A}$, and each $v_{*}\in\Sigma_{*}^{A}$
satisfies $v_{*}=T_{A^T}(T_{A}(v_{*}))$.
\end{itemize}
\end{theorem}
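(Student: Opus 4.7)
The plan rests on two already-established facts: from Theorem~\ref{thm:monotone} together with \eqref{eqn:same-limit}, the interlaced sequences $\{\|\tw_k\|\}$ and $\{\|\tv_k\|\}$ are monotone nondecreasing and tend to a common limit $\tau\geq\|\tw_0\|>0$; and from Theorem~\ref{thm:evenodd}, $\|v_{k+1}-v_k\|\to 0$. With these in hand, closedness, connectedness, the grade condition, and the fixed-point identity can be treated in turn.

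Closedness of $\Sigma_*^A$ is the standard observation that the subsequential-limit set of any sequence in $\R^n$ is closed. For connectedness I would run the classical pigeonhole argument exploiting the vanishing step size: assume for contradiction that $\Sigma_*^A$ is the disjoint union of two nonempty closed subsets $C_1,C_2$ of the unit sphere, and let $d:=\mathrm{dist}(C_1,C_2)>0$. The open $d/3$-neighborhoods $B_1,B_2$ of $C_1,C_2$ are disjoint, and any $x\in B_1$, $y\in B_2$ satisfy $\|x-y\|\geq d/3$. Since every subsequential limit of $\{v_k\}$ lies in $\Sigma_*^A\subset B_1\cup B_2$, compactness of the sphere forces $v_k\in B_1\cup B_2$ for all sufficiently large $k$; both neighborhoods are entered infinitely often, so infinitely many transitions $v_k\in B_i$, $v_{k+1}\in B_{3-i}$ must occur, contradicting $\|v_{k+1}-v_k\|<d/3$ for all large $k$.

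For part~(2), unit norm is preserved in the limit, so the content is the grade condition $d(A,v_*)\geq s+1$. By Lemma~\ref{lem:basic}(1), $\|\tw_k\|=f(v_k)$ where
\[
f(v):=\min_{q\in\M_s}\|q(A)v\|.
\]
Fixing any optimizer $p_*$ for $v_*$ gives $f(v)\leq\|p_*(A)v\|\to\|p_*(A)v_*\|=f(v_*)$ as $v\to v_*$, so $f$ is upper semi-continuous. Along any subsequence $v_{k_j}\to v_*\in\Sigma_*^A$, $f(v_{k_j})=\|\tw_{k_j}\|\to\tau$, hence $f(v_*)\geq\tau>0$, which by Lemma~\ref{lem:basic}(1) forces $d(A,v_*)\geq s+1$; thus $v_*\in\Sigma^A$.

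For the fixed-point identity I would observe that $T_A$ is continuous on $\Sigma^A$: the coefficients of $P_s(A;v)$ solve a linear system whose matrix is the Gram matrix of $v,Av,\dots,A^{s-1}v$, which is invertible and depends continuously on $v\in\Sigma^A$, and the normalizer $\|P_s(A;v)v\|=f(v)$ is bounded away from zero there. The same upper-semi-continuity argument, applied to $\|\tv_{k+1}\|$ and to the sequence $\{w_k\}$, yields $w_*:=T_A(v_*)\in\Sigma^{A^T}$, so $T_{A^T}$ is continuous at $w_*$. Passing to the limit in $v_{k_j+1}=T_{A^T}(T_A(v_{k_j}))$ along the subsequence, and using $v_{k_j+1}\to v_*$ (from Theorem~\ref{thm:evenodd}), yields $v_*=T_{A^T}(T_A(v_*))$. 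The main obstacle is the grade bound $d(A,v_*)\geq s+1$: it is precisely the monotonicity $\|\tw_k\|\geq\|\tw_0\|>0$ combined with upper semi-continuity of $f$ that rules out a collapse in the limit. Once this is secured, the continuity of $T_A$ and $T_{A^T}$ at the relevant points makes the remainder routine.
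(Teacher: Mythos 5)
Your proof is correct and follows essentially the same strategy as the paper: closedness is standard, connectedness comes from the vanishing step size of Theorem~\ref{thm:evenodd} via a pigeonhole/transition argument, and the fixed-point identity comes from passing to the limit in $v_{k+1}=T_{A^T}(T_A(v_k))$ along a convergent subsequence using continuity. The one genuine difference is the order in which you establish the two claims of part~(2). The paper first proves $v_*=T_{A^T}(T_A(v_*))$ by invoking continuity of $T_{A^T}\circ T_A$ at $v_*$, and only afterwards deduces $d(A,v_*)\geq s+1$ by arguing that $d(A,v_*)\leq s$ would force the composite map to vanish; this tacitly applies $T_A$ at a point not yet known to lie in its domain $\Sigma^A$. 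You instead prove the grade bound first, from upper semi-continuity of $f(v)=\min_{q\in\M_s}\|q(A)v\|$ together with $f(v_{k_j})=\|\tw_{k_j}\|\to\tau\geq\|\tw_0\|>0$, and only then invoke continuity of $T_A$ on $\Sigma^A$ (and of $T_{A^T}$ at $w_*$, justified the same way). This ordering removes the small circularity in the paper's argument and also supplies the continuity justification (via the Gram matrix of $v,Av,\dots,A^{s-1}v$) that the paper merely asserts, at the cost of a slightly longer write-up.
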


\begin{proof}
(1) Suppose that the sequence $\{v_{k}\}$ has two isolated limit vectors, $v_{*,1}$ and $v_{*,2}$. Then there exist neighborhoods $U_{1}$ and $U_{2}$ of $v_{*,1}$ and $v_{*,2}$, respectively, which do not contain other limit vectors of the sequence. Let $\{v_{k_{i}}\}$
be set of vectors in the sequence that is contained in $U_{1}$. Then the sequence $\{v_{k_{i}}\}$ converges to $v_{*,1}$, since this sequence can have only one limit vector. We know that the distance $\|v_{k+1}-v_{k}\|$ converges to zero for $k\rightarrow\infty$; see Theorem~\ref{thm:evenodd}. Therefore almost all (i.e., all except finitely many) successors of vectors in the set $\{v_{k_{i}}\}$ are contained in $U_{1}$. But this implies that only a finite number of the vectors of the sequence $\{v_{k}\}$ can be outside $U_{1}$, which contradicts that $v_{*,2}$ is a limit vector. Consequently, the sequence $\{v_{k}\}$ cannot have two isolated limit vectors.

(2) If $v_{*}\in\Sigma_{*}^{A}$, then there exists a subsequence $\{v_{k_{i}}\}$ of the sequence $\{v_{k}\}$ that converges to $v_{*}$. For each vector $v_{k_{i}}$ in the subsequence we have $T_{A^T}(T_{A}(v_{k_{i}}))=v_{k_{i}+1}$, and therefore
$$\|T_{A^T}(T_{A}(v_{k_{i}}))-v_{k_{i}}\|=\|v_{k_{i}+1}-v_{k_{i}}\|\rightarrow 0\quad \mbox{for $k\rightarrow\infty$};$$
see Theorem~\ref{thm:evenodd}. Since $T_{A^T}\circ T_{A}$ is continuous and $v_{k_{i}}\rightarrow v_{*}$,
we have $T_{A^T}(T_{A}(v_{k_{i}}))\rightarrow T_{A^T}(T_{A}(v_{*}))$, and hence $v_{*}=T_{A^T}(T_{A}(v_{*}))$.

It is clear that every $v_{*}\in\Sigma_{*}^{A}$ satisfies $\|v_{*}\|=1$. If $d(A,v_{*})\leq s$, then Lemma~\ref{lem:basic} implies $T_{A^T}(T_{A}(v_{*}))=0$, in contradiction to $T_{A^T}(T_{A}(v_{*}))=v_{*}$. Thus, $d(A,v_{*})\geq s+1$, which shows that $\Sigma_{*}^{A}\subseteq\Sigma^{A}$.
\end{proof}

\section{Results for special cases}\label{sec:special}

In this section we will first derive some general results about the ACI($s$) for symmetric matrices. Then we prove the ACI($1$) conjecture for symmetric, and for real orthogonal matrices with $d(A)=n$
and eigenvalues having only positive (or only negative) real parts.

\subsection{The ACI($s$) for symmetric matrices}\label{sec:4.1}

If $A=A^T\in\R^{n\times n}$, the steps \eqref{eqn:twk}--\eqref{eqn:wk} and \eqref{eqn:tvk}--\eqref{eqn:vkp1} in the ACI($s$) are identical, and hence we can write the algorithm in following simpler form:
\begin{align}
 & \mbox{For \ensuremath{k=0,1,2,\dots}}\nonumber \\
 & \hspace{1cm}\mbox{\ensuremath{\tv_{k+1}=\monic{A}{v_k}{v_k}},}\label{eqn:tvk-simple}\\
 & \hspace{1cm}\mbox{\ensuremath{v_{k+1}=\tv_{k+1}/\|\tv_{k+1}\|}.}\label{eqn:vk-simple}
\end{align}
Our conjecture now is that for each integer $s$ with $1\leq s<d(A)$, and unit norm initial vector $v_{0}\in\mathbb{R}^{n}$ with $d(A,v_{0})\geq s+1$, the sequence $\{v_{2k}\}$ in \eqref{eqn:tvk-simple}--\eqref{eqn:vk-simple} has a single limit vector.

Theorem~\ref{thm:monotone} for $A=A^T$ and the simplified algorithm \eqref{eqn:tvk-simple}--\eqref{eqn:vk-simple} says that
$$\|\tv_k\|\leq \|\tv_{k+1}\|,\quad k=0,1,2,\dots,$$
with equality if and only if $v_k=\alpha v_{k+2}$ for some $\alpha\neq 0$.
Knowing that $v_k$ and $v_{k+2}$ have unit norm, and that $\langle v_k,v_{k+2}\rangle > 0$ (see \eqref{eqn:ineq1}), we must have $\alpha =1$, i.e., $v_k=v_{k+2}$.
This can actually happen, as shown in the next result, which adapts~\cite[Theorem~4.8]{For68} to our context.

\medskip{}
\begin{theorem}
\label{thm:collinear} Let $A=A^{T}\in\mathbb{R}^{n\times n}$, let $s$ be an integer with $1\leq s<d(A)$, and let $v_{0}\in\mathbb{R}^{n}$ be such that $\|v_0\|=1$ and $d(A,v_{0})=s+1$. Then the vectors $v_{0},v_{2},v_{4},\dots$ in \eqref{eqn:tvk-simple}--\eqref{eqn:vk-simple} are all equal.
\end{theorem}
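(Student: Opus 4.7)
The plan is to exploit the hypothesis $d(A,v_0)=s+1$ to confine the entire iteration to the $(s{+}1)$-dimensional $A$-invariant subspace $V:=\K_{s+1}(A,v_0)$, and then use symmetry of $A$ to show that the orthogonality condition defining $v_1$ forces $v_0$ itself to be orthogonal to $\K_s(A,v_1)$.

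First I would observe that $\tv_1=\monic{A}{v_0}{v_0}$ is by construction a polynomial in $A$ applied to $v_0$, hence $\tv_1\in V$, and so $v_1\in V$. Since $V$ is $A$-invariant, $\K_s(A,v_1)\subseteq V$. Combining $d(A,v_1)\geq s+1$ from Theorem~\ref{thm:monotone} with the fact that $v_1$ lives in the $(s{+}1)$-dimensional invariant subspace $V$, one gets $d(A,v_1)=s+1$ and therefore $\K_{s+1}(A,v_1)=V$. Consequently $\tv_2=\monic{A}{v_1}{v_1}\in V$ and $\tv_2\perp \K_s(A,v_1)$, so $\tv_2$ belongs to the orthogonal complement of $\K_s(A,v_1)$ inside $V$; this complement is one-dimensional because $\dim\K_s(A,v_1)=s$ and $\dim V=s+1$.

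The key step is then to locate a known vector in this one-dimensional complement. The orthogonality $v_1\perp\K_s(A,v_0)$ means $\langle A^j v_0,v_1\rangle=0$ for $j=0,\dots,s-1$; by symmetry of $A$ this is equivalent to $\langle v_0,A^j v_1\rangle=0$ for the same range of $j$, i.e., $v_0\perp\K_s(A,v_1)$. Since $v_0\in V$ as well, $v_0$ spans exactly the one-dimensional complement. Therefore $\tv_2=\gamma\,v_0$ for some $\gamma\in\R$, and normalization gives $v_2=\pm v_0$. The sign is pinned down by computing $\langle v_0,\tv_2\rangle=\langle v_0,A^s v_1\rangle=\langle A^s v_0,v_1\rangle=\langle \tv_1,v_1\rangle=\|\tv_1\|>0$, where in the middle equalities I use symmetry of $A$ together with $\tv_2-A^sv_1\in\K_s(A,v_1)\perp v_0$ and $\tv_1-A^sv_0\in\K_s(A,v_0)\perp v_1$; this is just the equality case discussion preceding the theorem, which shows $\alpha=1$.

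This proves $v_2=v_0$. Since $v_2=v_0$ and the iteration \eqref{eqn:tvk-simple}--\eqref{eqn:vk-simple} is deterministic, applying the same argument inductively (or simply unrolling) yields $v_{2k}=v_0$ for all $k\geq 0$. I do not expect a serious obstacle here: the heart of the argument is the self-adjointness used to transfer $v_1\perp\K_s(A,v_0)$ into $v_0\perp\K_s(A,v_1)$, combined with the dimension count inside the invariant subspace $V$. The only detail requiring a bit of care is checking that $d(A,v_1)=s+1$ so that $\K_{s+1}(A,v_1)=V$ exactly, rather than a proper subspace of $V$.
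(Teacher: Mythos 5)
Your proof is correct, and it rests on the same two pillars as the paper's: the $A$-invariance of the $(s{+}1)$-dimensional space $V=\K_{s+1}(A,v_0)$, and the use of $A=A^T$ to transfer an orthogonality condition across the inner product. The difference is in the final step. The paper flips $v_2\perp\K_s(A,v_1)$ into $v_1\perp\K_s(A,v_2)$, deduces $\K_s(A,v_0)=\K_s(A,v_2)$ as the common orthogonal complement of $v_1$ in $V$, expands $v_2=\sum_{j=0}^{s-1}\gamma_j A^j v_0$, and then kills the coefficients $\gamma_{s-1},\dots,\gamma_1$ one by one by testing against $A^i v_1$. You instead flip $v_1\perp\K_s(A,v_0)$ into $v_0\perp\K_s(A,v_1)$ and observe that $v_0$ and $\tv_2$ both lie in the one-dimensional orthogonal complement of $\K_s(A,v_1)$ inside $V$, so they are collinear at once; this replaces the coefficient induction by a single dimension count and is the more economical route. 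Your sign determination via $\langle v_0,\tv_2\rangle=\langle v_0,A^sv_1\rangle=\langle A^sv_0,v_1\rangle=\|\tv_1\|>0$ is also valid and is essentially the paper's appeal to $\langle v_0,v_2\rangle>0$ from \eqref{eqn:ineq1}. The one detail you flag — that $d(A,v_1)=s+1$ exactly, so that $\K_{s+1}(A,v_1)=V$ — does follow as you say from $d(A,v_1)\ge s+1$ (Theorem~\ref{thm:monotone}) together with $\K_k(A,v_1)\subseteq V$ and $\dim V=s+1$; strictly speaking you only need $\dim\K_s(A,v_1)=s$ and $\tv_2\in V$ for your complement argument, so nothing is missing.
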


\begin{proof}
First note that if $d(A,v_0)=s+1$, then the subspace $\mathcal{K}_{s+1}(A,v_{0})$ is $A$-invariant. Therefore, all vectors $v_k$ are contained in the same $s+1$ dimensional space, and by Theorem \ref{thm:monotone} it holds that $d(A,v_{k})=s+1$ for all $k\geq 0$. As a consequence,
$$
	\dim \mathcal{K}_{s}(A,v_{k}) = s,\quad k = 0,1,\dots .
$$
It suffices to show that $v_{0}= v_{2}$, then the equality of all vectors $v_{0},v_{2},v_{4},\dots$ follows inductively.
By construction, $v_1\perp\mathcal{K}_{s}(A,v_{0})$ and $ v_2\perp\mathcal{K}_{s}(A,v_{1})$, where the second orthogonality condition is equivalent to
$v_1\perp\mathcal{K}_{s}(A,v_{2})$. Therefore,
$$
  v_1\;\perp\;\mathcal{K}_{s}(A,v_{0})\quad\mbox{and}\quad v_1\;\perp\;\mathcal{K}_{s}(A,v_{2}).
$$
Since $\dim \mathcal{K}_{s}(A,v_{0}) = \dim \mathcal{K}_{s}(A,v_{2}) = s$
and since
all vectors are contained in the same $s+1$ dimensional space,
we have $\mathcal{K}_{s}(A,v_{0})=\mathcal{K}_{s}(A,v_{2})$. Hence,
\[
v_{2}=\sum_{j=0}^{s-1}\gamma_{j}A^{j}v_{0},
\]
for some coefficients $\gamma_{0},\dots,\gamma_{s-1}\in\mathbb{R}$.
We will show that $\gamma_{0}=1$ and $\gamma_{j}=0$ for $j>0$.

Using $v_2 \perp A^i v_{1}$ for $i=0,\dots,s-1$ we obtain
\[
0=\langle A^i v_{1},v_{2}\rangle=
\langle  v_{1},A^i v_{2}\rangle =
\sum_{j=0}^{s-1}\gamma_{j}\langle v_{1},A^{j+i}v_{0}\rangle,\quad i=0,\dots,s-1.
\]
Now realize that
$\langle v_{1},A^{m}v_{0}\rangle= 0$ for $m=0,\dots,s-1$
and $\langle v_{1},A^{m}v_{0}\rangle \neq 0$ for $m\geq s$
because $A^m v_0 \in {\mathcal K}_{s+1}(A,v_0)\,\backslash\,{\mathcal K}_{s}(A,v_0)$.
In particular, for $i=1$ we obtain
\[
0=\langle A v_{1},v_{2}\rangle=\gamma_{s-1}\langle v_{1},A^{s}v_{0}\rangle.
\]
Since $\langle v_{1},A^{s}v_{0}\rangle\neq 0$, we get $\gamma_{s-1}=0$. Proceeding by induction we obtain $\gamma_{s-2}=\cdots=\gamma_{1}=0$. Therefore, $v_{2}=\gamma_{0}v_{0}$. Since $v_0$ and $v_{2}$ are unit norm vectors and $\langle v_0,v_{2}\rangle > 0$ (see \eqref{eqn:ineq1}) we get $\gamma_0=1$.
\end{proof}
\medskip{}

We next prove a result about the limit vectors of the sequence \eqref{eqn:tvk-simple}--\eqref{eqn:vk-simple}. The part about their grades adapts~\cite[Theorem~4.7]{For68} to our context.
\medskip{}

\begin{theorem}\label{thm:limit-degree}
Let $A=A^T\in\mathbb{R}^{n\times n}$, let $s$ be an integer with $1\leq s<d(A)$, and let $v_{0}\in\mathbb{R}^{n}$ be such that $\|v_0\|=1$ and $d(A,v_{0})\geq s+1$.
Then each limit vector $v_*$ of \eqref{eqn:tvk-simple}--\eqref{eqn:vk-simple} satisfies
\begin{equation}\label{eqn:interpolation}
(\Qpol{A}{v_*}-\tau^2 I)v_{*}=0,
\end{equation}
where $\tau$ is the limit value of the sequence $\{\|\widetilde{v}_k\|\}$  (as in~\eqref{eqn:same-limit}),
$$
 \Qpol{z}{v_*} := \mpol{z}{w_*}\mpol{z}{v_*}, \quad\mbox{and}\quad w_* := \frac{\monic{A}{v_*}{v_*}}{\tau},
$$
so that, in particular, $s<d(A,v_{*})\leq 2s$.
\end{theorem}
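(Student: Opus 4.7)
The plan is to unwind one full cycle of the iteration around the limit vector $v_*$, using the fixed-point property from Theorem~\ref{thm:limit-set} together with continuity of the Arnoldi projection. Because $A=A^T$, the map $T_{A^T}\circ T_A$ specializes to $T_A\circ T_A$, so $v_*=T_A(T_A(v_*))$. The first step is to check that both iterates of the projection are well defined at $v_*$: since $v_*\in\Sigma^A$ by Theorem~\ref{thm:limit-set}(2), the polynomial $\mpol{z}{v_*}$ exists and $\monic{A}{v_*}{v_*}\neq 0$, so $w_*:=\monic{A}{v_*}{v_*}/\tau$ is a genuine unit vector provided $\tau>0$, which follows from the monotonicity $0<\|\widetilde v_1\|\leq \|\widetilde v_k\|\to\tau$ in Theorem~\ref{thm:monotone}.

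Next I would pass to the limit along a subsequence $v_{k_i}\to v_*$. Using continuity of $v\mapsto \monic{A}{v}{v}$ (the coefficients of $\mpol{z}{v}$ depend continuously on $v$ on the open set where $d(A,v)\geq s$, since they solve a Gram-type linear system whose matrix is positive definite there), the sequence $\widetilde v_{k_i+1}=\monic{A}{v_{k_i}}{v_{k_i}}$ converges to $\monic{A}{v_*}{v_*}$, and by \eqref{eqn:same-limit} its norm tends to $\tau$. Hence $\|\monic{A}{v_*}{v_*}\|=\tau$, so $w_*=T_A(v_*)$ is exactly the unit vector in the statement. Continuity then gives $v_{k_i+1}\to w_*$, so $w_*$ is itself a limit vector; repeating the argument yields $\|\monic{A}{w_*}{w_*}\|=\tau$ and $T_A(w_*)$ equals the limit of $v_{k_i+2}$, which by the fixed-point relation must be $v_*$. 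Consequently
\[
\monic{A}{w_*}{w_*}=\tau v_*.
\]

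Substituting $w_*=\monic{A}{v_*}{v_*}/\tau$ into this identity gives $\tau^2 v_* = \mpol{A}{w_*}\,\mpol{A}{v_*}\,v_* = \Qpol{A}{v_*}\,v_*$, which is precisely \eqref{eqn:interpolation}. For the grade bounds, the polynomial $\Qpol{z}{v_*}-\tau^2$ has degree $2s$ and annihilates $v_*$, so $d(A,v_*)\leq 2s$; on the other hand, $v_*\in\Sigma^A$ forces $d(A,v_*)\geq s+1$, giving $s<d(A,v_*)\leq 2s$.

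The only delicate point I expect is the continuity claim for $v\mapsto \mpol{z}{v}$, which requires checking that the Gram matrix defining the projection coefficients is nonsingular in a neighborhood of $v_*$; this is immediate from $d(A,v_*)\geq s+1>s$ together with lower semicontinuity of the grade, but it needs to be stated explicitly since it underlies the application of continuity both to $T_A$ at $v_*$ and to the norms $\|\widetilde v_k\|$ along the subsequence.
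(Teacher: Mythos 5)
Your proof is correct and follows essentially the same route as the paper: use the fixed-point relation $v_*=T_A(T_A(v_*))$ from Theorem~\ref{thm:limit-set}, identify both normalization factors with $\tau$, compose the two monic polynomials to obtain $(\Qpol{A}{v_*}-\tau^2 I)v_*=0$, and read off the grade bounds. The only difference is that you spell out, via continuity of $v\mapsto\monic{A}{v}{v}$ along a subsequence $v_{k_i}\to v_*$, why $\|\monic{A}{v_*}{v_*}\|=\|\monic{A}{w_*}{w_*}\|=\tau$ --- a point the paper's proof uses implicitly --- which is a worthwhile clarification rather than a departure.
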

\smallskip

\begin{proof}
We use the notation of Theorem~\ref{thm:limit-set}. It is clear that each limit vector $v_*\in\Sigma^{A}_*$ satisfies $d(A,v_*)>s$, so we only need to show that $d(A,v_*)\leq 2s$. Moreover, since $A=A^T$, we have $T_A^2(v_{*})=v_{*}$.

By construction,
$$
T_A(v_*)=\frac{\monic{A}{v_*}{v_*}}{\tau}
$$ for a uniquely determined polynomial $\mpol{z}{v_*}\in\mathcal{M}_{s}$, and
$$
T_A^{2}(v_*)=\frac{\monic{A}{w_*}{w_*}}{\tau}, \quad w_*=T_A(v_*),
$$
for a uniquely determined polynomial $\mpol{z}{w_*}\in\mathcal{M}_{s}$; cf. Lemma~\ref{lem:basic}. Note that
$\tau$ is independent of the limit vector $v_*$. Thus,
\[
T_A^{2}(v_*)=\frac{\mpol{A}{w_*}\monic{A}{v_*}{v_*}}{{\tau^2}} =:
\frac{\monicQ{A}{v_*}{v_*}}{\tau^2}
\]
for a uniquely determined polynomial $\Qpol{z}{v_*}\in\mathcal{M}_{2s}$. Finally, using $T_A^{2}(v_{*})=v_{*}$ we obtain $(\Qpol{A}{v_*}-{\tau^2} I)v_{*}=0$, and hence $d(A,v_{*})\leq2s$.
\end{proof}
\medskip{}

Consider an orthogonal diagonalization of $A$, i.e., $A=U\Lambda U^T$, where $\Lambda={\rm diag}(\lambda_1,\dots,\lambda_n)$ and $U$ is orthogonal.
Let $v_* = U \nu$ where $\nu = [\nu_1,\dots,\nu_n]^T \in \mathbb{R}^n$, is the coordinate vector of $v_*$ in the eigenvector basis. Then the condition \eqref{eqn:interpolation} can also be written in the form
$$
	\Qpol{\lambda_i}{v_*}\nu_i = {\tau^2} \nu_i,\quad i=1,\dots,n.
$$
If $d(A,v_*)= d$, then there are exactly~$d$ nonzero coordinates $\nu_{k_i}$, $i=1,\dots,d$,
and the polynomial $\Qpol{z}{v_*}$ satisfies the~$d$ interpolation conditions
$$
	\Qpol{\lambda_{k_i}}{v_*}= \tau^2,\quad i=1,\dots,d.
$$
In particular, if $d=2s$, then $\Qpol{z}{v_*}$ is uniquely determined by the corresponding interpolation conditions, so that
$$
\Qpol{z}{v_*}  = \prod_{i=1}^{2s} (z-\lambda_{k_i}) + \tau^2.
$$
Since there exist only finitely many combinations of $2s$ distinct eigenvalues of~$A$, there are only finitely many polynomials $\Qpol{z}{v_*}$ that correspond to limit vectors~$v_*$ having degree $2s$.

The set $\Sigma_{*}^{A}$ of limit vectors of the sequence~$\{v_{2k}\}$, is a closed and connected set on the unit sphere; see Theorem~\ref{thm:limit-set}. Therefore, in order to show that $\Sigma_{*}^{A}$ consist of a single vector, it is sufficient to show that $\Sigma_{*}^{A}$ contains only finitely many vectors. The existence of only finitely many limit polynomials $\Qpol{z}{v_*}$ can potentially be used to prove that there can be only finitely many limit vectors $v_*$, thereby obtaining a proof of the ACI($s$) conjecture for symmetric matrices and general~$s$.

We point out that the convergence of the coefficients of the iteration polynomials and its consequences for the convergence of the sequence $\{v_{2k}\}$ is an essential ingredient in the work of Zhuk and Bondarenko~\cite{ZhuBon83} on the original Forsythe conjecture and the case $s=2$. In particular, they show that for $s=2$ the coefficients of the monic polynomials $\Qpol{z}{v_k}$ converge to their limit values. Quoting a paper by Zabolotskaya, they use as a proven fact that the convergence of the polynomial coefficients \emph{implies} the existence of a single limit vector of the even iterates; see~\cite[property~4, p.~429]{ZhuBon83}. However, the English translation of Zabolotskaya's paper~\cite[p.~238]{Zab79} states that the convergence of the polynomial coefficients would only \emph{indicate} the existence of a single limit vector. This may well be an imprecise translation of Zabolotskaya's Russian original, but it would nevertheless be very useful to have a more transparent proof of this essential property. Until then we consider the Forsythe conjecture for the case $s=2$ to be still open.

\subsection{Proof of the ACI($1$) conjecture for symmetric matrices}
\label{sec:s1}

If $s=1$, then Theorem~\ref{thm:limit-degree} shows that every limit vector $v_*$ of the sequence $\{v_{2k}\}$ in \eqref{eqn:tvk-simple}--\eqref{eqn:vk-simple} satisfies $d(A,v_*)=2$, i.e., $v_*$ is a linear combination of exactly two linearly independent eigenvectors of $A$. This observation is essential for proving the existence of a single limit vector of the sequence $\{v_{2k}\}$ for $s=1$.

Consider the ACI($1$) for $A=A^T\in\R^{n \times n}$, $s=1$. In this case we have (cf. \eqref{eqn:arnoldi})
\[
w=\monic{A}{v}{v}=Av-\alpha v\;\perp\;{\rm span}\{v\}.
\]
Thus, $0=v^{T}w=v^{T}Av-\alpha v^{T}v$, which yields
\[
\alpha=\frac{v^{T}Av}{v^{T}v},
\]
i.e., $\alpha$ is the \emph{Rayleigh quotient} of $A$ and $v$. Therefore, the algorithm \eqref{eqn:tvk-simple}--\eqref{eqn:vk-simple} can be written as follows:
\begin{align}
 & \mbox{For \ensuremath{k=0,1,2,\dots}}\nonumber \\
% & \hspace{1cm}\mbox{\ensuremath{\rho_{k}=v_{k}^{T}Av_{k}}}\label{eqn:RQ}\\
 & \hspace{1cm}\tv_{k+1}=(A-\rho_k I)v_{k},\quad \rho_{k}=v_{k}^{T}Av_{k},\label{eqn:s1-1}\\
 & \hspace{1cm}v_{k+1}=\tv_{k+1}/\|\tv_{k+1}\|.\label{eqn:s1-2}
 \end{align}
Let $A=U\Lambda U^{T}$ be an orthogonal diagonalization of $A$ with $\Lambda={\rm diag}(\lambda_{1},\dots,\lambda_{n})$. Then $\rho_k=(U^Tv_k)\Lambda (U^T v_k)$, and in \eqref{eqn:s1-1} we can write $U^T\tv_{k+1}=(\Lambda-\rho_{k}I)U^{T}v_{k}$. This shows that without loss of generality we can consider the behavior of the \eqref{eqn:s1-1}--\eqref{eqn:s1-2} for a diagonal matrix~$A$. Moreover, without loss of generality we will assume that $A$ has $n$ distinct eigenvalues.
\smallskip

\begin{theorem}\label{thm:s1}
Let $A={\rm diag}(\lambda_{1},\dots,\lambda_{n})\in\mathbb{R}^{n\times n}$
with $\lambda_{1}<\lambda_{2}<\cdots<\lambda_{n}$, and let $v_{0}$ be a unit norm initial vector with $d(A,v_0)\geq 2$.  Then the sequence $\{v_{2k}\}$ in  \eqref{eqn:s1-1}--\eqref{eqn:s1-2} converges to a single limit vector.
\end{theorem}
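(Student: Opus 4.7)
The plan is to combine the algebraic constraint on limit vectors given by Theorem~\ref{thm:limit-degree} with the connectedness of $\Sigma_*^A$ from Theorem~\ref{thm:limit-set}, and show that $\Sigma_*^A$ is in fact a \emph{finite} subset of $\mathbb{R}^n$. Since a connected, nonempty, finite subset of a Hausdorff space is a singleton, this forces $\Sigma_*^A$ to consist of a single vector and then, in the usual way, implies convergence of the bounded sequence $\{v_{2k}\}$.

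By Theorem~\ref{thm:limit-degree} with $s=1$, every $v_* \in \Sigma_*^A$ satisfies $d(A,v_*)=2$. Since $A$ is diagonal with distinct eigenvalues, this forces $v_* = \nu_1 e_{k_1}+\nu_2 e_{k_2}$ with $\nu_1,\nu_2\neq 0$ and $\nu_1^2+\nu_2^2=1$, for some pair $\{k_1,k_2\}\subset\{1,\dots,n\}$. A direct Rayleigh-quotient calculation yields
\[
\monic{A}{v_*}{v_*} \;=\; (\lambda_{k_1}-\lambda_{k_2})\,\nu_1\nu_2\,(\nu_2 e_{k_1}-\nu_1 e_{k_2}),
\]
and therefore $\|\monic{A}{v_*}{v_*}\| = |\lambda_{k_1}-\lambda_{k_2}|\cdot|\nu_1\nu_2|$. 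On the other hand, the common limit $\tau$ from~\eqref{eqn:same-limit} is a single global scalar independent of the limit vector (it is the limit of $\|\tv_k\|$ along the entire sequence, passed to any subsequential limit via continuity of $v\mapsto\|\monic{A}{v}{v}\|$), so
\[
|\lambda_{k_1}-\lambda_{k_2}|\cdot|\nu_1\nu_2| \;=\; \tau.
\]

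To isolate the support, I would denote by $S_{k_1,k_2}$ the set of unit vectors supported exactly on $\{k_1,k_2\}$. The axis vectors $\pm e_j$ have grade $1$ and thus lie outside $\Sigma^A$, from which it follows that each $S_{k_1,k_2}$ is closed in $\Sigma^A$, and that the sets $S_{k_1,k_2}$ for different pairs are pairwise disjoint. Hence $\Sigma_*^A$ is the finite disjoint union of the sets $\Sigma_*^A \cap S_{k_1,k_2}$, each of which is closed in $\Sigma_*^A$; in a finite disjoint union of closed pieces each piece is also open, and the connectedness asserted in Theorem~\ref{thm:limit-set}(1) forces all but one piece to be empty. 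Within the surviving pair, the two equations $\nu_1^2+\nu_2^2=1$ and $|\nu_1\nu_2|=\tau/|\lambda_{k_1}-\lambda_{k_2}|$ form a polynomial system in $(\nu_1,\nu_2)$ admitting only finitely many real solutions, so $\Sigma_*^A$ is finite, and nonempty by Bolzano--Weierstrass, hence a singleton.

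The main subtlety is the $\tau$-identity above: without it, the fixed-point relation $T_A^2(v_*)=v_*$ turns out to be satisfied identically by \emph{every} unit vector of the form $\nu_1 e_{k_1}+\nu_2 e_{k_2}$ (as one checks by a short direct calculation using the formula for $\monic{A}{v_*}{v_*}$ above), so the $T_A^2$-invariance alone does not pin down the coefficients. Using that $\tau$ is a global invariant of the iteration is precisely what upgrades the continuous one-parameter family of candidates on each pair to a finite list. The same approach is obstructed for $s\geq 2$ because Theorem~\ref{thm:limit-degree} then only gives $d(A,v_*)\leq 2s$, permitting supports of variable size $s+1,\dots,2s$ and continuous parameter families of candidate limit vectors that the analogous $\tau$-constraint no longer trims to a finite set.
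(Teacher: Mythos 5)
Your proof is correct and takes essentially the same route as the paper: Theorem~\ref{thm:limit-degree} forces $d(A,v_*)=2$, the global invariant $\tau$ from \eqref{eqn:same-limit} pins down $|\nu_1\nu_2|$ on each two-dimensional coordinate support via $\tau=\|\monic{A}{v_*}{v_*}\|$, and finiteness together with the closedness and connectedness of $\Sigma_*^A$ yields a single limit vector. The intermediate topological decomposition into the sets $S_{k_1,k_2}$ is harmless but redundant once finiteness of $\Sigma_*^A$ is established.
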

\smallskip

\begin{proof}
It is
sufficient to show that $\Sigma_{*}^{A}$ contains only finitely many vectors.
Let $v_*\in \Sigma_{*}^{A}$. We know that there exists a fixed $\tau>0$ (independent of~$v_*$) with $\|\tv_{k}\|=\|Av_k-(v_k^TAv_k)v_k\|\rightarrow\tau$ for $k\rightarrow\infty$; see \eqref{eqn:same-limit}. Thus,
\begin{equation}\label{eqn:tau-eq1}
\tau=\left\Vert Av_{*}-\left(v_{*}^{T}Av_{*}\right)v_{*}\right\Vert.
\end{equation}
Moreover, Theorem~\ref{thm:limit-degree} implies that $d(A,v_*)=2$, and hence $v_*=\alpha e_i+\beta e_j$ for some canonical basis vectors $e_{i}$ and $e_{j}$, $1\leq i< j\leq n$, and nonzero $\alpha,\beta\in\R$ that satisfy $\alpha^2+\beta^2=1$.

Now suppose that some vector $v=\alpha e_i+\beta e_j$ with $\alpha^2+\beta^2=1$ satisfies the equation \eqref{eqn:tau-eq1}. Then
\begin{align*}
\tau^{2} &=\left\Vert Av-\left(v^{T}Av\right)v\right\Vert ^{2}=
v^{T}A^{2}v-\left(v^{T}Av\right)^{2} \\ &=\alpha^{2}\lambda_{i}^{2}+\beta^{2}\lambda_{j}^{2}-\left(\alpha^{2}\lambda_{i}+\beta^{2}\lambda_{j}\right)^{2}\\
 & =  \alpha^{2}\lambda_{i}^{2}(1-\alpha^{2})+\beta^{2}\lambda_{j}^{2}(1-\beta^{2})-2\alpha^{2}\beta^{2}\lambda_{i}\lambda_{j}\\
 & =  \alpha^{2}\beta^{2}\left(\lambda_{i}^{2}-2\lambda_{i}\lambda_{j}+\lambda_{j}^{2}\right)\\
 & =  \alpha^{2}\beta^{2}\left(\lambda_{i}-\lambda_{j}\right)^{2}\\
 & =  \alpha^{2}\left(1-\alpha^{2}\right)\left(\lambda_{i}-\lambda_{j}\right)^{2}.
\end{align*}
There are only finitely many combinations of distinct $i,j\in\{1,2,\dots,n\}$, and for each such combination there are only finitely many values of $\alpha\in\R$ that satisfy
$$\alpha^{2}\left(1-\alpha^{2}\right)=\frac{\tau^2}{\left(\lambda_{i}-\lambda_{j}\right)^{2}}.$$
Consequently, there are only finitely many vectors of the form $v=\alpha e_i+\beta e_j$ with $\alpha^2+\beta^2=1$ that satisfy the equation \eqref{eqn:tau-eq1}. Therefore there can be only finitely many $v_*\in \Sigma_{*}^{A}$, which shows that the sequence $\{v_{2k}\}$ converges to a single limit vector.
\end{proof}
\medskip

Afanasjew, Eiermann, Ernst, and G\"uttel~\cite[Section~3]{AfaEieErnGue08} also have shown that the sequence $\{v_{2k}\}$ in  \eqref{eqn:s1-1}--\eqref{eqn:s1-2} converges to a single limit vector. Similar to the original proof of Forsythe~\cite{For68}, their proof is based on first showing that each limit vector is a linear combination of $e_1$ and $e_n$ (or eigenvectors corresponding to the smallest and largest eigenvalue of~$A$), and then showing that there can be only finitely many such combinations. Variations of this approach have appeared also in other proofs for symmetric positive definite matrices and $s=1$; see, e.g.,~\cite{Aka59,GonSch16}. The approach is longer and more technical than our proof of Theorem~\ref{thm:s1}, but the results give more information about the structure of the limiting vectors.

\subsection{On the ACI($1$) conjecture for orthogonal matrices}\label{sec:ACI-orth}

In this section we first prove the ACI($1$) conjecture for orthogonal matrices $A\in\R^{n\times n}$ with $0\notin F(A)$, the field of values of~$A$, and we then comment on the behavior when $0\in F(A)$.

Let $A\in\R^{n\times n}$ be orthogonal. Starting from a unit norm vector $v_0\in\R^n$ with $d(A,v_0)\geq s+1$, the ACI($1$) is as follows:
\begin{align}
& \mbox{For $k=0,1,2,\dots$}\nonumber\\
& \hspace{1cm}\widetilde{w}_k = (A-\alpha_k I)v_k,\quad \alpha_k=v_{k}^TAv_k,\label{eqn:ACI1a}\\
& \hspace{1cm}w_k = \widetilde{w}_k / \|\widetilde{w}_k\|,\\
& \hspace{1cm}\widetilde{v}_{k+1} = (A^T-\beta_k I)w_k,\quad \beta_k=w_{k}^TA^Tw_k=w_{k}^TAw_k,\\
& \hspace{1cm}v_{k+1} =\widetilde{v}_k/\|\widetilde{v}_k\|.\label{eqn:ACI1b}
\end{align}
The Rayleigh quotients $\alpha_k$ and $\beta_k$ in the ACI($1$) are real elements in $F(A)$. Moreover,
\begin{align}
\|\widetilde{w}_k\|^2 &= v_{k}^T (A^T-\alpha_k I)(A-\alpha_k I)v_k=1-\alpha_k^2,
\label{eqn:norw}
\\
\|\widetilde{v}_{k+1}\|^2 &= w_{k}^T (A-\beta_k I)(A^T-\beta_k I)w_k=1-\beta_k^2,
\label{eqn:norv}
\end{align}
where we have used that $A$ is orthogonal. By construction, $\|\widetilde{w}_k\|>0$ and $\|\widetilde{v}_{k+1}\|>0$, so that $\alpha_k,\beta_k\in (-1,1)$.
Since $\|\widetilde{w}_k\|\leq \|\widetilde{v}_{k+1}\|$, we always have
\begin{equation}\label{eqn:betalpha}
|\beta_k| \leq |\alpha_k|\,.
\end{equation}
We know from \eqref{eqn:same-limit} that the sequences $\{\|\widetilde{w}_k\|\}$ and $\{\|\widetilde{v}_k\|\}$ converge to the same limit. Therefore, $\alpha_k^2-\beta_k^2\rightarrow 0$, so that there exist $\alpha,\beta\in (-1,1)$ with
\begin{equation}\label{eqn:alpha-lim}
|\alpha|=|\beta|,\quad \alpha_k\rightarrow\alpha,\quad\mbox{and}\quad \beta_k\rightarrow\beta.
\end{equation}
Here $\alpha$ and $\beta$ are independent of the limit vectors of the sequences $\{v_k\}$ and $\{w_k\}$.

It is well known that an orthogonal matrix~$A\in \R^{n\times n}$ can be orthogonally block-diagonalized as $A=UGU^T$ with
\begin{equation*}
U=[U_1,\dots,U_m,u_1,\dots,u_k],\quad G={\rm diag}(G_1,\dots,G_m,[\pm 1],\dots,[\pm 1]),
\end{equation*}
where $U^TU=I$, $U_1,\dots,U_m\in\R^{n\times 2}$, $u_1,\dots u_k\in\R^n$, and
\begin{equation}\label{eqn:Dj}
G_j=\begin{bmatrix} c_j & s_j\\ -s_j & c_j\end{bmatrix}\in\R^{2\times 2},
\quad c_j^2+s_j^2=1,\quad s_j\neq 0,\quad j=1,\dots,m.
\end{equation}
The blocks $G_j$ in \eqref{eqn:Dj} correspond to the non-real eigenvalues of~$A$, which occur in complex conjugate pairs $c_j\pm \mathbf{i}\, s_j$ with real parts $c_j\in (-1,1)$. By transforming the iterates of the ACI($1$) with $U^T$ (similarly to the transformation for symmetric matrices in Section~\ref{sec:s1}) we can assume without loss of generality that~$A$ is in the block-diagonal form, i.e., that $A={\rm diag}(G_1,\dots,G_m,[\pm 1],\dots,[\pm 1])$.

We will now assume that~$A\in\R^{n\times n}$ is orthogonal with $d(A)=n$ and $0\notin F(A)$. Then in the orthogonal block-diagonalization of $A$ there can be at most one block of size $1\times 1$, either $[1]$ or $[-1]$, and $c_1,\dots,c_m$ are pairwise distinct and either all positive or all negative. For simplicity, we will state and prove the next results only for the positive case.
\smallskip

\begin{lemma}\label{lem:orthogonal1}
Let $A={\rm diag}(G_1,\dots,G_m,[1])\in\R^{n\times n}$, where possibly the block $[1]$ does not occur, have blocks $G_j$ as in \eqref{eqn:Dj} with $0<c_1<\cdots<c_m$. If $v_0\in\R^n$ is any unit norm initial vector with $d(A,v_0)\geq 2$, then any limit vector $v_*\in\Sigma_*^A$ of the sequence $\{v_k\}$ in \eqref{eqn:ACI1a}--\eqref{eqn:ACI1b} satisfies $d(A,v_*)=2$, and $\alpha=\beta$ is equal to the real part of an eigenvalue of~$A$.
\end{lemma}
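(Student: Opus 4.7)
The plan is to exploit the fixed-point relation $v_{*}=T_{A^T}(T_{A}(v_{*}))$ from Theorem~\ref{thm:limit-set} together with the orthogonality of $A$ to identify $\mathrm{span}\{v_{*},w_{*}\}$ (with $w_{*}:=T_{A}(v_{*})$) as one of the $2\times 2$ rotation blocks of $A$.

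First I would make the fixed point explicit. Combining \eqref{eqn:norw}--\eqref{eqn:norv}, \eqref{eqn:alpha-lim} and the continuity of $T_A$ and $T_{A^T}$ with Theorem~\ref{thm:limit-set}(2), I get an orthonormal pair $v_{*},w_{*}$ satisfying
$$Av_{*}=\alpha v_{*}+\tau w_{*},\qquad A^{T}w_{*}=\beta w_{*}+\tau v_{*},\qquad \tau^{2}=1-\alpha^{2}=1-\beta^{2}>0.$$
Multiplying the second relation by $A$, using $AA^{T}=I$, and substituting the first yields, whenever $\beta\neq 0$,
$$Aw_{*}=\frac{\alpha^{2}}{\beta}\,w_{*}-\frac{\tau\alpha}{\beta}\,v_{*},$$
so $\mathrm{span}\{v_{*},w_{*}\}$ is $A$-invariant.

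Second, I would resolve the sign ambiguity in $|\alpha|=|\beta|$. Since $\alpha_{k}=v_{k}^{T}Av_{k}=v_{k}^{T}\frac{A+A^{T}}{2}v_{k}$ and the symmetric part of $A$ equals $\mathrm{diag}(c_{1}I_{2},\dots,c_{m}I_{2},[1])$ (with the $[1]$ block possibly absent), its spectrum consists of the positive numbers $c_{1},\dots,c_{m}$ (and possibly $1$). Hence $\alpha_{k}$, and similarly $\beta_{k}$, are uniformly bounded below by a positive constant, so $\alpha,\beta>0$; together with $|\alpha|=|\beta|$ this gives $\alpha=\beta$. This step is the crucial use of the positive-real-part hypothesis: without it, $\alpha=-\beta$ would be compatible with $|\alpha|=|\beta|$ and would place a reflection block (with the forbidden eigenvalue $-1$) on $\mathrm{span}\{v_{*},w_{*}\}$.

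With $\alpha=\beta$, the matrix of $A$ on $\mathrm{span}\{v_{*},w_{*}\}$ in the orthonormal basis $\{v_{*},w_{*}\}$ is the rotation $\begin{pmatrix}\alpha & -\tau\\ \tau & \alpha\end{pmatrix}$, with eigenvalues $\alpha\pm i\tau$. Because $d(A)=n$ and the $c_{j}$ are pairwise distinct, the unique real two-dimensional $A$-invariant subspace carrying this eigenvalue pair is $\mathrm{range}(U_{j})$ for the unique $j$ with $c_{j}=\alpha$ and $s_{j}=\tau$. Therefore $v_{*}\in\mathrm{range}(U_{j})$, the minimal polynomial of $A$ on this subspace is $(z-c_{j})^{2}+s_{j}^{2}$, and consequently $d(A,v_{*})=2$ with $\alpha=c_{j}$ equal to the real part of the conjugate eigenvalue pair $c_{j}\pm i s_{j}$ of $A$, as claimed.
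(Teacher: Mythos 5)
Your proof is correct and follows essentially the same route as the paper: both start from the fixed-point identity $v_*=T_{A^T}(T_A(v_*))$ of Theorem~\ref{thm:limit-set}, exploit $AA^T=I$, and use the positivity of the Rayleigh quotients (i.e., $0\notin F(A)$) together with $|\alpha|=|\beta|$ to force $\alpha=\beta>0$. The only difference is the endgame: the paper collapses the two half-steps into the single relation $(\alpha A^T+\beta A)v_*=(\alpha^2+\alpha\beta)v_*$ and reads off $v_*$ as an eigenvector of the symmetric part $\tfrac12(A+A^T)=\mathrm{diag}(c_1I_2,\dots,c_mI_2,[1])$, whereas you exhibit $\mathrm{span}\{v_*,w_*\}$ as an $A$-invariant $2$-plane carrying the non-real eigenvalue pair $\alpha\pm\mathbf{i}\tau$ (so that $\tau=|s_j|$, a harmless sign detail) and match it to a block --- both endgames are valid, and yours additionally records the rotation structure of $A$ on that plane.
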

\smallskip

\begin{proof}
Let $v_*$ be any limit vector of the sequence $\{v_k\}$. It is clear that $d(A,v_*)\geq 2$, so it suffices to show that $d(A,v_*)\leq 2$.

According to Theorem~\ref{thm:limit-set} we have $v_{*}=T_{A^T}(T_{A}(v_{*}))$, i.e.,
\begin{align*}
& \widetilde{w}_* = (A-\alpha I)v_*,\quad \alpha=v_{*}^TAv_*,\\
& w_* = \widetilde{w}_* / \|\widetilde{w}_*\|,\\
& \widetilde{v}_{*} = (A^T-\beta I)w_*,\quad \beta=w_{*}^TA^Tw_*=w_{*}^TAw_*,\\
& v_{*} =\widetilde{v}_*/\|\widetilde{v}_*\|.
\end{align*}
Moreover, \eqref{eqn:norw}--\eqref{eqn:norv} and \eqref{eqn:alpha-lim} yield $\|\widetilde{v}_*\|^2=\|\widetilde{w}_*\|^2=1-\alpha^2$, so that
\begin{align*}
v_* &=T_{A^T}(T_{A}(v_{*}))=
\frac{(A^T-\beta I)(A-\alpha I)v_*}{\|\widetilde{w}_*\|\, \|\widetilde{v}_*\|}=\frac{((1+\alpha\beta)I-\alpha A^T-\beta A)v_*}{1-\alpha^2}.
\end{align*}
This yields the equation
\begin{equation}\label{eqn:v-eq}
(\alpha A^T+\beta A)v_*=(\alpha^2+\alpha\beta)v_*.
\end{equation}
Since $\alpha$ and $\beta$ are real elements in~$F(A)$ with $|\alpha|=|\beta|$, we must have $\alpha=\beta\in(0,1)$. Then \eqref{eqn:v-eq} implies that the limit vector $v_*$ satisfies
\begin{equation}\label{eqn:v-eq-1}
\frac12 (A+A^T)v_*=\alpha v_*.
\end{equation}
Note that
$$\frac12 (A+A^T)={\diag}(c_1 I_2,\dots, c_m I_2, [1]),$$
where possibly the block $[1]$ does not occur. Since $c_1,\dots,c_m\in (0,1)$ are pairwise distinct, we must have $\alpha=c_j$ for exactly one index $j\in\{1,\dots,m\}$. Consequently, $v_*=[0,\dots,0,y^T,0,\dots,0]^T$ for some unit norm vector $y\in\R^2$ corresponding to the $j$th block, which shows that $d(A,v_*)\leq 2$.
\end{proof}

\medskip
For the given orthogonal matrix $A={\rm diag}(G_1,\dots,G_m,[1])\in\R^{n\times n}$, where possibly the block $[1]$ does not occur, we will consider the corresponding block-partitioning of the vectors $v_k$, i.e.,
\begin{equation}\label{eqn:partitioning}
v_k = \begin{bmatrix}
v_k^{(1)}\\
\vdots\\
v_k^{(m)}\\
v_k^{(m+1)}
\end{bmatrix},\quad  v_k^{(j)}\in \R^2, \quad j=1,\dots,m,\quad v_k^{(m+1)} \in \R,
\end{equation}
where possibly the block $v_k^{(m+1)}$ does not occur.
\medskip

\begin{lemma}\label{lem:c1}
Let $A={\rm diag}(G_1,\dots,G_m,[1])\in\R^{n\times n}$, where possibly the block $[1]$ does not occur, have blocks $G_j$ as in \eqref{eqn:Dj} with $0<c_1<\cdots<c_m$. If
$v_0\in\R^n$ is any unit norm initial vector with $d(A,v_0)\geq 2$ and $v_0^{(1)} \neq 0$, then any limit vector $v_*\in\Sigma_*^A$ of the sequence $\{v_k\}$ in \eqref{eqn:ACI1a}--\eqref{eqn:ACI1b} has zero entries except for the block $v_*^{(1)}$, $\| v_*^{(1)}\|=1$, and $\alpha=\beta=c_1$.
\end{lemma}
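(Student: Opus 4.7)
The plan is to track the block-components $v_k^{(j)}$ individually and show that, under the hypothesis $v_0^{(1)}\neq 0$, the mass concentrates entirely in the first block as $k\to\infty$. By Lemma~\ref{lem:orthogonal1}, any limit vector $v_*\in\Sigma_*^A$ already satisfies $d(A,v_*)=2$ and is supported on a single $2\times 2$ block $G_j$, with $\alpha=\beta$ equal to the real part of a pair of eigenvalues of $A$, so $\alpha\in\{c_1,\dots,c_m\}$. It therefore suffices to prove that $\|v_k^{(1)}\|\to 1$; the identification $\alpha=v_*^TAv_*=c_1\|v_*^{(1)}\|^2$ then pins down $\alpha=\beta=c_1$.

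The first step is a block-wise energy computation. Since $A$ is block-diagonal, each step of the ACI($1$) acts block-wise: $\widetilde{w}_k^{(j)}=(G_j-\alpha_k I)v_k^{(j)}$ for $j=1,\dots,m$ and $\widetilde{w}_k^{(m+1)}=(1-\alpha_k)v_k^{(m+1)}$ when the $[1]$-block occurs. Using $(G_j-\alpha I)^T(G_j-\alpha I)=(1-2\alpha c_j+\alpha^2)I$, this yields
\[
\|\widetilde{w}_k^{(j)}\|^2=(1-2\alpha_k c_j+\alpha_k^2)\,\|v_k^{(j)}\|^2,
\]
with the analogous formula for $\|\widetilde{v}_{k+1}^{(j)}\|^2$ in terms of $\beta_k$. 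Setting $r_k^{(j)}:=\|v_k^{(j)}\|^2/\|v_k^{(1)}\|^2$ for $j\geq 2$ (well-defined for all $k$: since $\det(G_1-\gamma I)=(c_1-\gamma)^2+s_1^2>0$ for every $\gamma\in\R$, the factor $G_1-\alpha_k I$ is invertible, and $v_0^{(1)}\neq 0$ propagates to $v_k^{(1)}\neq 0$ by induction), the iteration gives the multiplicative recursion
\[
r_{k+1}^{(j)}=\frac{(1-2\alpha_k c_j+\alpha_k^2)(1-2\beta_k c_j+\beta_k^2)}{(1-2\alpha_k c_1+\alpha_k^2)(1-2\beta_k c_1+\beta_k^2)}\,r_k^{(j)}.
\]

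The second step is to show the contraction factor is bounded away from $1$. Because $\tfrac12(A+A^T)\succeq c_1 I$, we have $\alpha_k=v_k^TAv_k\geq c_1>0$ and $\beta_k\geq c_1$ for all $k$; and because $\tau>0$ (Theorem~\ref{thm:monotone} and \eqref{eqn:same-limit} give $\|\widetilde{w}_k\|\geq\|\widetilde{w}_0\|>0$) we have $\alpha^2=1-\tau^2<1$, so the common limit $\alpha=\beta$ from \eqref{eqn:alpha-lim} lies in $[c_1,1)$. For $j\geq 2$ we have $c_j>c_1$, so the limit
\[
\left(\frac{1-2\alpha c_j+\alpha^2}{1-2\alpha c_1+\alpha^2}\right)^{\!2}
\]
is strictly less than $1$, and the factor is eventually bounded by some $1-\varepsilon<1$ uniformly in $k$. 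Hence $r_k^{(j)}\to 0$ for every $j\geq 2$, and $\sum_j\|v_k^{(j)}\|^2=1$ forces $\|v_k^{(1)}\|^2\to 1$ and $\|v_k^{(j)}\|^2\to 0$ for $j\geq 2$, yielding the desired conclusion for every $v_*\in\Sigma_*^A$.

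The main obstacle is securing the \emph{uniform} bound $f_k^{(j)}\leq 1-\varepsilon$ rather than mere pointwise strict inequality; without uniformity, the infinite product of factors could fail to vanish. This uniformity rests on knowing that $\alpha_k$ and $\beta_k$ share a single limit in $[c_1,1)$, which in turn combines the monotone convergence in Theorem~\ref{thm:monotone} with the spectral identification in Lemma~\ref{lem:orthogonal1}. The minor bookkeeping point that $v_k^{(1)}$ never vanishes is handled by the invertibility of $G_1-\gamma I$ for every real $\gamma$, guaranteed by $s_1\neq 0$.
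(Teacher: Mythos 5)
Your proof is correct, and it takes a genuinely different route from the paper's. The paper works with the same block-wise norm recursion $\|v_{k+1}^{(j)}\|^{2}=\zeta_k^{(j)}\|v_k^{(j)}\|^{2}$, but argues by contradiction: from Lemma~\ref{lem:orthogonal1} it knows $\alpha=\beta=c_\ell$ for some $\ell$, supposes $\ell>1$, and uses $\alpha_k\geq\beta_k\geq\cdots\geq c_\ell$ to show that $\|v_k^{(1)}\|$ would then grow geometrically, contradicting $\|v_k\|=1$. You instead pass to the ratios $r_k^{(j)}=\|v_k^{(j)}\|^{2}/\|v_k^{(1)}\|^{2}$, which cancel the global normalization, and observe that the resulting multiplicative factor tends to $\bigl((1-2\alpha c_j+\alpha^{2})/(1-2\alpha c_1+\alpha^{2})\bigr)^{2}<1$ for every $j\geq2$ and for \emph{every} possible limit $\alpha\in[c_1,1)$ — so you never need to know in advance to which $c_\ell$ the Rayleigh quotients converge, and the contradiction step disappears. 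This is arguably cleaner, and it delivers as a byproduct the geometric decay $\|v_k^{(j)}\|^{2}\leq r_k^{(j)}=\mathcal{O}((1-\varepsilon)^{k})$ for $j\geq2$, which the paper only obtains later in Lemma~\ref{lem:unit0} after first pinning down $\alpha=c_1$. Your supporting points are all sound: $v_k^{(1)}\neq0$ propagates because $\det(G_1-\gamma I)=(c_1-\gamma)^{2}+s_1^{2}>0$ for all real $\gamma$; $\alpha_k,\beta_k\geq c_1$ because $\tfrac12(A+A^T)\succeq c_1I$; and $\alpha<1$ because $\tau\geq\|\widetilde{w}_0\|>0$. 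Two trivial remarks: the symbol $f_k^{(j)}$ in your closing paragraph is undefined (you mean the ratio's multiplicative factor), and your initial appeal to Lemma~\ref{lem:orthogonal1} is actually dispensable, since $\|v_k^{(1)}\|\to1$ alone already forces every limit vector to be supported on the first block with $\alpha=v_*^TAv_*=c_1$.
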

\smallskip

\begin{proof}
The ACI($1$) yields
\begin{align*}
v_{k+1}=\frac{(A^{T}-\beta_{k}I)(A-\alpha_{k}I)}{\|\widetilde{w}_{k}\|\,\|\widetilde{v}_{k+1}\|}\,v_{k},
\quad k=0,1,2,\dots.
\end{align*}
If the block $[1]$ occurs, then
$$v_{k+1}^{(m+1)}=\frac{(1-\beta_k)(1-\alpha_k)}{\sqrt{1-\beta_{k}^{2}}\sqrt{1-\alpha_{k}^{2}}}v_k^{(m+1)}
=\left(\frac{(1-\beta_k)(1-\alpha_k)}{(1+\beta_k)(1+\alpha_k)}\right)^{1/2}v_k^{(m+1)},\ \
k=0,1,2,\dots,$$
where we have used \eqref{eqn:norw}--\eqref{eqn:norv}. Since the factor that multiplies $v_k^{(m+1)}$ is less than~$1$, we have $v_k^{(m+1)}\rightarrow 0$ and hence $v_*^{(m+1)}=0$.

The assertion is trivial if $m=1$, so we can assume that $m>1$. For the other blocks we have the equation
\begin{equation}\label{eqn:blocks}
v_{k+1}^{(j)}=
\frac{(G_{j}^{T}-\beta_{k}I)(G_{j}-\alpha_{k}I)}{\sqrt{1-\beta_{k}^{2}}\sqrt{1-\alpha_{k}^{2}}}\,v_{k}^{(j)},
\quad j=1,\dots,m.
\end{equation}
Taking the squared norm in \eqref{eqn:blocks} and using the fact that
\[
(G_{j}^{T}-\delta I)(G_{j}-\delta I)=(1+\delta^{2}-2c_{j}\delta)I
\]
holds for any real $\delta$, we obtain
\begin{align*}
\|v_{k+1}^{(j)}\|^{2} & =  \left(\frac{1+\beta_{k}^{2}-2\beta_{k}c_{j}}{1-\beta_{k}^{2}}\right)\left(\frac{1+\alpha_{k}^{2}-2\alpha_{k}c_{j}}{1-\alpha_{k}^{2}}\right)\|v_{k}^{(j)}\|^{2}\\
 & = \left(1+2\beta_{k}\frac{\beta_{k}-c_{j}}{1-\beta_{k}^{2}}\right)\left(1+2\alpha_{k}\frac{\alpha_{k}-c_{j}}{1-\alpha_{k}^{2}}\right)\|v_{k}^{(j)}\|^{2}.
\end{align*}

We will now prove by contradiction that
$\beta_{k}\rightarrow c_{1}$ and $\alpha_{k}\rightarrow c_{1}$.
Suppose that $\alpha_{k}\rightarrow c_{\ell}$ and $\beta_{k}\rightarrow c_{\ell}$ for some $\ell>1$. Using \eqref{eqn:betalpha} we know that
$$
\alpha_{k}\geq\beta_{k} \geq \alpha_{k+1} \geq \beta_{k+1} \geq \dots \geq c_{\ell},
$$
and hence, for $j=1$,
\[
\|v_{k+1}^{(1)}\|\geq
\left( 1+2c_{\ell}\frac{c_{\ell}-c_{1}}{1-c_{\ell}^{2}} \right)
\|v_{k}^{(1)}\|\geq
\left( 1+2c_{\ell}\frac{c_{\ell}-c_{1}}{1-c_{\ell}^{2}} \right)^{k+1}
\|v_{0}^{(1)}\|.
\]
But since $1+2c_{\ell}\frac{c_{\ell}-c_{1}}{1-c_{\ell}^{2}}>1$ and $v_0^{(1)}\neq 0$, this implies $\|v_{k}^{(1)}\|\rightarrow\infty$, in contradiction to the normalization of the vectors $v_k$. Therefore, $\beta_{k}\rightarrow c_{1}$ and $\alpha_{k}\rightarrow c_{1}$, and Lemma~\ref{lem:orthogonal1} yields that $v_*$ has the required form.
\end{proof}
\medskip{}

Our next goal is show that under the assumptions of Lemma~\ref{lem:c1} there is only one uniquely determined vector in $\Sigma_*^A$.
In the following lemma we show that norms of the blocks $v_{k}^{(j)}$ for $j>1$ converge to zero at least linearly.
\smallskip

\begin{lemma}\label{lem:unit0}
If $A$ and $v_0$ are as in Lemma~\ref{lem:c1}, then exist an index $k_{0}$
and $0<\varrho<1$, such that for all $k\geq k_{0}$,
\[
\|v_{k+1}^{(j)}\|\leq \varrho \,\|v_{k}^{(j)}\|,\quad j=2,\dots,m+1.
\]
\end{lemma}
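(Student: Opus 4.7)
The plan is to extract everything needed from the formulas already derived inside the proof of Lemma~\ref{lem:c1}, together with the convergence $\alpha_k,\beta_k\to c_1$ established there. Those two ingredients immediately give explicit limits for the ratios $\|v_{k+1}^{(j)}\|/\|v_{k}^{(j)}\|$ for $j\geq 2$, and the whole lemma is then a matter of checking that each such limit is strictly less than $1$.

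For $j=2,\dots,m$ the proof of Lemma~\ref{lem:c1} produced the identity
$$\|v_{k+1}^{(j)}\|^{2} = \left(1+2\beta_{k}\frac{\beta_{k}-c_{j}}{1-\beta_{k}^{2}}\right)\left(1+2\alpha_{k}\frac{\alpha_{k}-c_{j}}{1-\alpha_{k}^{2}}\right)\|v_{k}^{(j)}\|^{2}.$$
Since $\alpha_k,\beta_k\to c_1$ and $c_1<c_j<1$, each factor on the right converges to $1+2c_1(c_1-c_j)/(1-c_1^2)$, which is strictly less than $1$ (because $c_1-c_j<0$ and $c_1>0$) and strictly positive (a short check using $c_1-c_j>c_1-1$ gives the lower bound $(1-c_1)/(1+c_1)>0$). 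Define $\varrho_j$ to be the square root of this squared limit. If the block $[1]$ occurs, the explicit relation
$$v_{k+1}^{(m+1)} = \left(\frac{(1-\beta_k)(1-\alpha_k)}{(1+\beta_k)(1+\alpha_k)}\right)^{1/2} v_k^{(m+1)}$$
from the same proof shows that $\|v_{k+1}^{(m+1)}\|/\|v_k^{(m+1)}\|$ converges to $\varrho_{m+1}:=(1-c_1)/(1+c_1)<1$.

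To finish, I would fix any $\varrho\in\bigl(\max_{2\le j\le m+1}\varrho_j,\,1\bigr)$. By convergence of each ratio to $\varrho_j$, there exists $k_0^{(j)}$ such that $\|v_{k+1}^{(j)}\|\le \varrho\,\|v_{k}^{(j)}\|$ for every $k\ge k_0^{(j)}$, and taking $k_0=\max_{j}k_0^{(j)}$ gives a single index that works uniformly in $j$.

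Honestly there is no real obstacle: the lemma is a bookkeeping statement that packages quantitative decay out of the qualitative convergence already shown. The only things one must not skip are (i) verifying that the limiting factor is positive, so that taking a square root in the $j\le m$ case is legitimate, and (ii) noting that the strict inequality $\varrho_j<1$ holds uniformly for $j=2,\dots,m+1$ because $c_1$ is the \emph{strict} minimum of $c_1,\dots,c_m$ and is strictly positive.
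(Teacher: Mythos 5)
Your proposal is correct and follows essentially the same route as the paper: it reuses the multiplicative identities $\|v_{k+1}^{(j)}\|^2=\zeta_k^{(j)}\|v_k^{(j)}\|^2$ from the proof of Lemma~\ref{lem:c1}, passes to the limit using $\alpha_k,\beta_k\to c_1$, and observes that each limiting factor is strictly less than $1$. Your explicit lower bound $(1-c_1)/(1+c_1)>0$ on the limiting factor is a small but welcome addition, since the paper's assertion that the squared limit is below $1$ implicitly needs exactly that check.
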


\begin{proof}
We know from the proof of Lemma~\ref{lem:c1} that
\begin{equation*}%\label{eqn:formalization}
\|v_{k+1}^{(j)}\|^{2}  \,= \, \zeta_{k}^{(j)} \|v_{k}^{(j)}\|^{2},
\quad j=1,\dots,m+1,
\end{equation*}
where
\begin{equation}\label{eqn:zetaj}
\zeta_{k}^{(j)}
=
\left(1+2\beta_{k}\frac{\beta_{k}-c_{j}}{1-\beta_{k}^{2}}\right)\left(1+2\alpha_{k}\frac{\alpha_{k}-c_{j}}{1-\alpha_{k}^{2}}\right),\quad j=1,\dots,m,
\end{equation}
and
\begin{equation}\label{eqn:zetam}
\zeta_{k}^{(m+1)} = \frac{(1-\beta_k)(1-\alpha_k)}{(1+\beta_k)(1+\alpha_k)}.
\end{equation}
Using that $\alpha_{k}\geq\beta_{k}\geq\alpha_{k+1}\geq\beta_{k+1}\geq\dots\geq c_{1}$ we obtain
$$
   \zeta_k^{(j)} \rightarrow \left(1+2c_{1}\frac{c_{1}-c_{j}}{1-c_{1}^{2}}\right)^2
   \;\; j=1,\dots,m,
\quad\mbox{and}\quad
    \zeta_k^{(m+1)} \rightarrow \left(\frac{1-c_1}{1+c_1}\right)^2,
$$
where the limit values are both strictly less than 1.
Consequently there exist $k_{0}\geq 0$ and $0<\varrho<1$ such that
\[
\|v_{k+1}^{(j)}\|^{2}\leq\varrho^{2}\|v_{k}^{(j)}\|^{2},\quad j=2,\dots,m+1,
\]
for all $k\geq k_{0}$.
\end{proof}\smallskip

Since we are interested only in convergence of the sequence of vectors, we can assume for simplicity and without loss of generality that $k_{0}=0$ in Lemma~\ref{lem:unit0}. For all $k\geq 0$ we then have
\begin{eqnarray*}
1 & = & \|v_{k+1}\|^{2}=\|v_{k+1}^{(1)}\|^{2}+\sum_{j=2}^{m+1}\|v_{k+1}^{(j)}\|^{2}
\leq \|v_{k+1}^{(1)}\|^{2}+\varrho^{2}\sum_{j=2}^{m+1}\|v_{k}^{(j)}\|^{2}\\
 & \leq & \|v_{k+1}^{(1)}\|^{2}+\varrho^{2(k+1)}\sum_{j=2}^{m+1}\|v_{0}^{(j)}\|^{2}.
\end{eqnarray*}
Therefore,
\begin{equation}\label{eqn:key}
1-\|v_{k+1}^{(1)}\|^{2}\leq\varrho^{2(k+1)}.
\end{equation}
The next result shows that the sequence $\{v_{k}^{(1)}\}\subset {\mathbb R}^2$ converges.
\smallskip

\begin{lemma} \label{lem:unit1}
Let $A$ and $v_0$ be as in Lemma~\ref{lem:c1}, and assume without loss of generality that $k_0=0$ in Lemma~\ref{lem:unit0}. Then, for all $k\geq 0$,
\[
\|v_{k+1}^{(1)}-v_{k}^{(1)}\|\leq \varrho^{k}.
\]
\end{lemma}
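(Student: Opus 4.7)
My plan is to compare $\|v_{k+1}^{(1)}-v_k^{(1)}\|^2$ with the monotone telescoping quantity $\|v_{k+1}^{(1)}\|^2-\|v_k^{(1)}\|^2$, which is already controlled by the normalization $\|v_{k+1}\|=1$ together with Lemma~\ref{lem:unit0}. The orthogonal block structure enters through a single scalar inequality on the $2\times 2$ iteration matrix acting on the first block, and that inequality is the crux.

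From \eqref{eqn:blocks} with $j=1$ we have $v_{k+1}^{(1)}=M_k v_k^{(1)}$ where $M_k=(G_1^T-\beta_k I)(G_1-\alpha_k I)/\sqrt{(1-\alpha_k^2)(1-\beta_k^2)}$. Writing $G_1=c_1 I+s_1 J$ with $J=\begin{bmatrix}0 & 1\\ -1 & 0\end{bmatrix}$ (so that $J^2=-I$), a direct expansion gives $M_k=\mu_k I+\nu_k J$ with
\[
\mu_k=\frac{s_1^2+(c_1-\alpha_k)(c_1-\beta_k)}{\sqrt{(1-\alpha_k^2)(1-\beta_k^2)}},\qquad
\nu_k=\frac{s_1(\alpha_k-\beta_k)}{\sqrt{(1-\alpha_k^2)(1-\beta_k^2)}}.
\]
Since $Jv\perp v$ and $\|Jv\|=\|v\|$ for every $v\in\R^2$, we obtain both $\|v_{k+1}^{(1)}\|^2=(\mu_k^2+\nu_k^2)\|v_k^{(1)}\|^2$ and $\|v_{k+1}^{(1)}-v_k^{(1)}\|^2=((\mu_k-1)^2+\nu_k^2)\|v_k^{(1)}\|^2$; subtracting yields the identity
\[
\|v_{k+1}^{(1)}-v_k^{(1)}\|^2=\bigl(\|v_{k+1}^{(1)}\|^2-\|v_k^{(1)}\|^2\bigr)+2(1-\mu_k)\|v_k^{(1)}\|^2.
\]

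The decisive step is the inequality $\mu_k\geq 1$. Because $\alpha_k=v_k^T A v_k=\sum_j c_j\|v_k^{(j)}\|^2$ is a convex combination of the real parts $c_j\in[c_1,1]$ of the eigenvalues of $A$, it follows that $\alpha_k\geq c_1$, and the same argument applied to $w_k$ gives $\beta_k\geq c_1$. Hence the numerator of $\mu_k$ is at least $s_1^2$, while $1-\alpha_k^2,\,1-\beta_k^2\leq 1-c_1^2=s_1^2$ bounds the denominator by $s_1^2$; therefore $\mu_k\geq 1$ and the last term in the identity above is non-positive. Combining this with $\|v_{k+1}\|=1$ and the geometric decay from Lemma~\ref{lem:unit0} yields
\[
\|v_{k+1}^{(1)}-v_k^{(1)}\|^2\leq \|v_{k+1}^{(1)}\|^2-\|v_k^{(1)}\|^2=\sum_{j\geq 2}\bigl(\|v_k^{(j)}\|^2-\|v_{k+1}^{(j)}\|^2\bigr)\leq\sum_{j\geq 2}\|v_k^{(j)}\|^2\leq \varrho^{2k}\sum_{j\geq 2}\|v_0^{(j)}\|^2\leq \varrho^{2k},
\]
where the last step uses $\sum_{j\geq 2}\|v_0^{(j)}\|^2=1-\|v_0^{(1)}\|^2\leq 1$. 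Taking square roots gives the lemma.
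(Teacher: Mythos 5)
Your proof is correct and takes essentially the same route as the paper's: both arguments reduce the claim to $\|v_{k+1}^{(1)}-v_k^{(1)}\|^2\leq\|v_{k+1}^{(1)}\|^2-\|v_k^{(1)}\|^2$ and then finish with the normalization $\|v_{k+1}\|=1$ and the geometric decay of the remaining blocks from Lemma~\ref{lem:unit0} (i.e.\ \eqref{eqn:key}). Your inequality $\mu_k\geq 1$ is exactly the paper's bound $\bigl(v_{k+1}^{(1)}\bigr)^T v_k^{(1)}\geq\|v_k^{(1)}\|^2$, merely repackaged through the decomposition $M_k=\mu_k I+\nu_k J$, with the same underlying facts $\alpha_k,\beta_k\geq c_1>0$.
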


\begin{proof}
We know that
\[
\|v_{k+1}^{(1)}-v_{k}^{(1)}\|^{2}=\|v_{k+1}^{(1)}\|^{2}+\|v_{k}^{(1)}\|^{2}
-2\left(v_{k+1}^{(1)}\right)^{T}\left(v_{k}^{(1)}\right),
\]
where
\begin{eqnarray*}
\left(v_{k+1}^{(1)}\right)^{T}\left(v_{k}^{(1)}\right) & = & \left(v_{k}^{(1)}\right)^{T}\frac{(G_{1}^{T}-\beta_{k}I)(G_{1}-\alpha_{k}I)}{\sqrt{1-\alpha_{k}^{2}}\sqrt{1-\beta_{k}^{2}}}\,v_{k}^{(1)}\\
 & = & \frac{1-c_{1}\left(\alpha_{k}+\beta_{k}\right)+\alpha_{k}\beta_{k}}{\sqrt{1-\alpha_{k}^{2}}\sqrt{1-\beta_{k}^{2}}}\left\Vert v_{k}^{(1)}\right\Vert ^{2}\\
 & \geq &
 \frac{1-\beta_k\left(\alpha_{k}+\beta_{k}\right)+\alpha_{k}\beta_{k}}{\sqrt{1-\alpha_{k}^{2}}\sqrt{1-\beta_{k}^{2}}}\left\Vert v_{k}^{(1)}\right\Vert ^{2} \geq
\left\Vert v_{k}^{(1)}\right\Vert ^{2} .
\end{eqnarray*}
Note that the next to last inequality follows from $\beta_k \geq c_1 > 0$. Therefore
\begin{eqnarray*}
\|v_{k+1}^{(1)}-v_{k}^{(1)}\|^{2}&\leq&\|v_{k+1}^{(1)}\|^{2}-\|v_{k}^{(1)}\|^{2}
\leq 1-\|v_{k}^{(1)}\|^{2} \leq \varrho^{2k},
\end{eqnarray*}
where we have used \eqref{eqn:key}.
\end{proof}
\smallskip

Now we are ready to prove the convergence theorem.

\smallskip
\begin{theorem} \label{thm:convergence}
Let $A={\rm diag}(G_{1},\dots,G_{m},[1])\in\mathbb{R}^{n\times n}$, $m\geq1$, be an orthogonal matrix with blocks $G_{j}$ as in \eqref{eqn:Dj}, and $0<c_{1}<c_{2}<\cdots<c_{m}<1$, where possibly the block $[1]$ does not occur. Let $v_{0}\in\mathbb{R}^{n}$ be any unit norm initial vector with $d(A,v_{0})\geq2$ such that $v_{0}^{(1)}\neq0$. Then the sequence $\{v_{k}\}$ in \eqref{eqn:ACI1a}--\eqref{eqn:ACI1b} converges to a single limit vector.
\end{theorem}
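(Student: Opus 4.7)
The plan is to combine Lemmas~\ref{lem:unit0} and~\ref{lem:unit1} to show that the full sequence $\{v_k\}$ is Cauchy in $\R^n$, and therefore converges to a unique limit vector. I work block-by-block using the partitioning~\eqref{eqn:partitioning}. First I note that without loss of generality I may assume $k_0=0$ in Lemma~\ref{lem:unit0}, since a shift of the starting index only drops finitely many terms and does not affect convergence of the tail.

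For the blocks indexed $j=2,\dots,m+1$, iterating the contraction from Lemma~\ref{lem:unit0} yields $\|v_k^{(j)}\| \leq \varrho^{k}\|v_0^{(j)}\|$, and since $0<\varrho<1$ these blocks tend to zero. For the first block, Lemma~\ref{lem:unit1} gives $\|v_{k+1}^{(1)}-v_k^{(1)}\| \leq \varrho^k$, and by telescoping,
$$\|v_\ell^{(1)}-v_k^{(1)}\| \;\leq\; \sum_{i=k}^{\ell-1}\varrho^i \;\leq\; \frac{\varrho^k}{1-\varrho}$$
for all $\ell>k$. Thus $\{v_k^{(1)}\}$ is Cauchy in $\R^2$ and converges to some $v_*^{(1)}\in\R^2$.

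Putting the blocks together, the sequence $\{v_k\}$ converges in $\R^n$ to the vector $v_* := (v_*^{(1)},0,\dots,0)^T$. To confirm $\|v_*\|=1$, I would invoke the bound~\eqref{eqn:key}, which forces $\|v_k^{(1)}\| \to 1$ and hence $\|v_*^{(1)}\|=1$. Since convergence of the full sequence immediately implies uniqueness of the limit vector, the theorem follows.

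There is no real obstacle left at this stage: all of the analytic work has already been carried out, with Lemma~\ref{lem:c1} pinning down the structure any limit vector must have, Lemma~\ref{lem:unit0} providing geometric decay of the spurious blocks, and Lemma~\ref{lem:unit1} giving the Cauchy-type bound for the active block. The conclusion is a clean assembly of these three ingredients, and the only care required is the harmless reduction to $k_0=0$ in the indexing.
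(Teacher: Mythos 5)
Your proposal is correct and follows essentially the same route as the paper: both reduce to $k_0=0$, use Lemma~\ref{lem:unit0} to force the blocks $j\geq 2$ to decay geometrically to zero, and use Lemma~\ref{lem:unit1} to control the first block. The paper packages this as a single summable bound $\|v_{k+1}-v_k\|^2\leq 3\varrho^{2k}$ for the whole vector, whereas you argue blockwise and telescope, but the underlying Cauchy-sequence argument is identical.
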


\begin{proof}
Using Lemmas~\ref{lem:unit0} and \ref{lem:unit1}, and assuming without loss of generality that $k_0=0$ in Lemma~\ref{lem:unit0}, we obtain for all $k\geq 0$,
\begin{eqnarray*}
\|v_{k+1}-v_{k}\|^{2}	&=&	\|v_{k+1}^{(1)}-v_{k}^{(1)}\|^{2}+\sum_{j=2}^{m+1}\|v_{k+1}^{(j)}-v_{k}^{(j)}\|^{2}\\
	&\leq& \varrho^{2k}+2\varrho^{2k}\sum_{j=2}^{m+1}\|v_{0}^{(j)}\|^{2}
	\,\leq\, 3\varrho^{2k}
\end{eqnarray*}	
which implies $\sum_{k=0}^{\infty}\|v_{k+1}-v_{k}\|<\infty$, and hence
finishes the proof.
\end{proof}
\smallskip

We now consider an orthogonal matrix $A\in\R^{n\times n}$ with $d(A)=n$ and $0\in F(A)$. Analogously to Lemma~\ref{lem:c1} we will show that (with appropriate assumptions on the initial vector~$v_0$) the sequences of the Rayleigh quotients $\{\alpha_k\}$ and $\{\beta_k\}$ in \eqref{eqn:ACI1a}--\eqref{eqn:ACI1b} converge to $\min_{z\in F(A)} \mathrm{Re}(z)$. Under the assumption $0\in F(A)$ this means that $\alpha=\beta=0$ in \eqref{eqn:alpha-lim}.

We can again assume without loss of generality that $A$ is block diagonal,
\[
A={\rm diag}([-1],G_{1},\dots,G_{m},[1])\in\mathbb{R}^{n\times n},\quad m\geq 1,
\]
with blocks $G_{j}$ as in \eqref{eqn:Dj}, and $-1<c_{1}<c_{2}<\cdots<c_{m}<1$, where the blocks $[-1]$ or $[1]$ possibly do not occur. Let us also formally define $c_0 = -1$ and $c_{m+1}=1$. We will consider
a block-partitioning of the vectors $v_{k}$ as in \eqref{eqn:partitioning}, where we add the block $v_{k}^{(0)}$ if the block $[-1]$ occurs.
\smallskip

\begin{lemma} \label{lem:zeroFA}
Let $A={\rm diag}([-1],G_{1},\dots,G_{m},[1])\in\mathbb{R}^{n\times n}$,
$m\geq1$, be an orthogonal matrix with blocks $G_{j}$ as in \eqref{eqn:Dj},
and $-1=c_0<c_{1}<c_{2}<\cdots<c_{m}<1=c_{m+1}$, where possibly the blocks $[-1]$
or $[1]$ do not occur. Let $v_{0}\in\mathbb{R}^{n}$ be a unit norm initial vector with $d(A,v_{0})\geq2$. Suppose that $v_{0}^{(\ell)}\neq0$ for some $\ell$,  $1\leq \ell\leq m$,  and that $v_{0}^{(j)}\neq0$ for some $j$, $0\leq j\leq m+1$, such that $c_{\ell}c_j \leq 0$. Then $\alpha=\beta=0$ in \eqref{eqn:alpha-lim}.
\end{lemma}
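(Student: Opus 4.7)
The plan is to argue by contradiction: assume $\alpha\neq 0$, so that by \eqref{eqn:alpha-lim} $|\alpha|=|\beta|>0$ and either $\beta=\alpha$ or $\beta=-\alpha$. The strategy follows the block-wise analysis of Lemmas~\ref{lem:c1} and~\ref{lem:unit0}, but is deployed here to force growth of some $\|v_k^{(i)}\|$, which contradicts $\|v_k\|=1$.

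First I would set up, exactly as in the proof of Lemma~\ref{lem:unit0}, the recursion $\|v_{k+1}^{(j)}\|^2=\zeta_k^{(j)}\|v_k^{(j)}\|^2$ and verify that the single formula
$$\zeta_k^{(j)}=\frac{(1+\alpha_k^2-2\alpha_k c_j)(1+\beta_k^2-2\beta_k c_j)}{(1-\alpha_k^2)(1-\beta_k^2)}$$
is valid for all $0\leq j\leq m+1$ with the convention $c_0=-1$ and $c_{m+1}=1$ (the $1\times 1$ cases collapse to the ratio already recorded in \eqref{eqn:zetam} and its $[-1]$-analogue). The operators $G_j-\alpha_k I$ and $G_j^T-\beta_k I$ are nonsingular (the $2\times 2$ blocks because $s_j\neq 0$, the $1\times 1$ blocks because $\alpha_k,\beta_k\in(-1,1)$), so the iteration preserves nonzero blocks: if $v_0^{(i)}\neq 0$ then $v_k^{(i)}\neq 0$ for every $k$. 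Combined with $\|v_k\|=1$, the partial products of $\zeta_k^{(i)}$ stay bounded; since $\zeta_k^{(i)}\to\zeta^{(i)}$ by \eqref{eqn:alpha-lim}, every index $i$ with $v_0^{(i)}\neq 0$ must satisfy $\zeta^{(i)}\leq 1$.

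Next I would split into the two subcases coming from $|\alpha|=|\beta|$. A direct factorization gives, for $\alpha=\beta$,
$$\zeta^{(j)}-1=\frac{4\alpha(\alpha-c_j)(1-\alpha c_j)}{(1-\alpha^2)^2},$$
and, for $\alpha=-\beta$,
$$\zeta^{(j)}-1=\frac{4\alpha^2(1-c_j^2)}{(1-\alpha^2)^2}.$$
In the subcase $\alpha=-\beta\neq 0$ the second expression is strictly positive whenever $|c_j|<1$; applying this to $j=\ell$ (where $1\leq\ell\leq m$ guarantees $|c_\ell|<1$) produces the required contradiction. In the subcase $\alpha=\beta\neq 0$, since $|\alpha c_j|<1$ the inequality $\zeta^{(j)}\leq 1$ is equivalent to $\alpha(\alpha-c_j)\leq 0$; applying this simultaneously to $j=\ell$ and to the index $j$ provided by the hypothesis forces both $c_\ell$ and $c_j$ to lie on the same side of $0$ as $\alpha$, hence $c_\ell c_j>0$, contradicting $c_\ell c_j\leq 0$.

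The main obstacle is purely organizational: verifying that the uniform formula for $\zeta_k^{(j)}$ really does cover the three block types (including $[\pm 1]$) and deploying the two parts of the hypothesis on $v_0$ in the correct subcase. The constraint $1\leq\ell\leq m$ is precisely what is needed to rule out the $\alpha=-\beta$ subcase, while the sign pairing $c_\ell c_j\leq 0$ is precisely what is needed to close the $\alpha=\beta$ subcase. All remaining steps are routine algebraic simplifications of the type already carried out in the proofs of Lemmas~\ref{lem:orthogonal1}, \ref{lem:c1}, and~\ref{lem:unit0}.
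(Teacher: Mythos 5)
Your proposal is correct and follows essentially the same route as the paper: contradiction via the block recursion $\|v_{k+1}^{(j)}\|^2=\zeta_k^{(j)}\|v_k^{(j)}\|^2$, the case split $\beta=-\alpha$ versus $\beta=\alpha$, and forcing $\zeta^{(j)}>1$ for a block that stays nonzero. Your factored form $\zeta^{(j)}-1=4\alpha(\alpha-c_j)(1-\alpha c_j)/(1-\alpha^2)^2$ and the uniform treatment of the $[\pm 1]$ blocks via $c_0=-1$, $c_{m+1}=1$ are a modest streamlining of the same argument.
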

\smallskip

\begin{proof}
From \eqref{eqn:betalpha} and \eqref{eqn:alpha-lim} we know that $|\alpha_{k}|\geq|\beta_{k}|$ and that $|\alpha_{k}|\rightarrow\gamma$ and $|\beta_{k}|\rightarrow\gamma$. Since $d(A,v_{0})\geq2$ and
$v_{0}^{(\ell)}\neq0$ for some $1\leq \ell\leq m$, we have $|\alpha_{0}|<1$, and therefore $0\leq\gamma<1$. We will prove by contradiction that $\gamma=0$. Let $\gamma>0$, i.e., $\alpha_{k}\rightarrow\alpha$ and $\beta_{k}\rightarrow\beta$, where $1>|\alpha|=|\beta|=\gamma>0$.

Suppose first that $\alpha=-\beta\neq0$, and consider a block
for which $v_{0}^{(\ell)}\neq0$, $1\leq \ell\leq m$. Then (see \eqref{eqn:zetaj})
\begin{eqnarray*}
\zeta_k^{(\ell)} \rightarrow\left(1+2\alpha\frac{\alpha+c_{j}}{1-\alpha{}^{2}}\right)\left(1+2\alpha\frac{\alpha-c_{j}}{1-\alpha^{2}}\right) & = & 1+\frac{4\alpha^{2}}{1-\alpha{}^{2}}\left(\frac{1-c_{j}^{2}}{1-\alpha{}^{2}}\right)>1,
\end{eqnarray*}
Therefore $\|v_{k}^{(\ell)}\|\rightarrow\infty$, which is a contradiction.

Suppose now that $1>\alpha=\beta>0$, and consider a block
with $c_j\leq 0$ such that $v_{0}^{(j)}\neq0$, $0\leq j \leq m$.
If $j=0$, then
\[
\zeta_k^{(0)} \rightarrow \frac{(1+\alpha)^{2}(1+\alpha)^{2}}{\left(1-\alpha^{2}\right)\left(1-\alpha^{2}\right)}>1,
\]
and if $j>0$, then
\[
\zeta_k^{(j)} \rightarrow \left(1+2\frac{\alpha^{2}-\alpha c_{j}}{1-\alpha^{2}}\right)^{2}>1
\]
giving a contradiction. Note that if $-1<\beta=\alpha<0$, then we derive a contradiction by considering
a block with $c_j\geq 0$ such that $v_{0}^{(j)}\neq0$.
\end{proof}
\smallskip

In order to prove that the sequence $\{v_{k}\}$ converges to a single limit vector also
when $0\in F(A)$, it would be sufficient to show that there exists $0<\varrho<1$ such that
\[
\ensuremath{\|v_{k+1}-v_{k}\|=\mathcal{O}\left(\varrho^{k}\right)}
\]
for $k$ sufficiently large; cf. the proof of Theorem~\ref{thm:convergence}. With some effort we can show that there is a constant $C>0$ such that
\[
\|v_{k+1}-v_{k}\|\leq C |\alpha_{k}|
\]
for $k$ sufficiently large. It remains to prove that the coefficients $\alpha_{k}$ converge to zero linearly, but this steps needs further investigation.

We will now show by an example that for an orthogonal matrix $A$ with $0\in F(A)$ a limit vector $v_*$ of the sequence $\{v_{k}\}$ in \eqref{eqn:ACI1a}--\eqref{eqn:ACI1b} may satisfy $d(A,v_*)>2$.  This is a significant difference to the case $0\notin F(A)$, where $d(A,v_*)=2$ holds for any limit vector $v_*$; see Lemma~\ref{lem:orthogonal1}.

\smallskip
\begin{example}
Let $A={\rm diag}(G_1,G_2)\in\R^{4\times 4}$ with blocks $G_{j}$ as in \eqref{eqn:Dj}, where $c_1\in (-1,0)$ and $c_2=-c_1$, so that in particular $0\in F(A)$. Let $v_0\in \R^4$ be any unit norm vector with $v_0^{(1)}=v_0^{(2)}$. Then $d(A)=d(A,v_0)=4$, and in \eqref{eqn:ACI1a}--\eqref{eqn:ACI1b} we get $\alpha_0=v_0^TAv_0=0$, and hence
\begin{align*}
\widetilde{w}_0 &=Av_0,\quad \|\widetilde{w}_0\|=1,\quad w_0=\widetilde{w}_0=Av_0,\quad \beta_0 =w_0^TAw_0=v_0^TAv_0=0,\\
\widetilde{v}_1&=A^Tw_0=v_0,\quad v_1=v_0.
\end{align*}
Consequently, $v_k=v_0$ and $w_k=Av_0$ for all $k\geq 0$. The unique limit point of the sequence $\{v_k\}$ is given by $v_*=v_0$, and $d(A,v_*)=4$.

\end{example}
\medskip

The observation in this example that a limit vector $v_*$ satisfies $d(A,v_*)=n$ is not surprising if we look at the proof of Lemma~\ref{lem:zeroFA}: Since $\alpha=0$, we have
\[
\zeta_{k}^{(j)}\rightarrow 1,\quad j=0,1,\dots,m+1,
\]
which would indicate that $v_*^{(j)}\neq 0$ if $v_{0}^{(j)}\neq 0$, and even $d(A,v_*)=d(A,v_0)$. However, we did not prove that this holds in general.

Finally, recall that the ideal Arnoldi problem for a matrix $A\in\R^{n\times n}$ and $s=1$ is given by
$$\min_{\alpha\in\R}\|A-\alpha I\|;$$
see \eqref{eqn:idealArnoldi}. A straightforward computation shows that
$$\min_{\alpha\in\R}\|A-\alpha I\|=\max_{\substack{v\in\R^n\\ \|v\|=1}} (\|Av\|^2-\langle v,Av\rangle^2)^{1/2}.$$
If $A\in\R^{n\times n}$ is normal and has the eigenvalues $\lambda_1,\dots,\lambda_n\in {\mathbb C}$, then
$$\min_{\alpha\in\R}\|A-\alpha I\|=\min_{\alpha\in\R}\max_{1\leq j\leq n} |\lambda_j-\alpha|,$$
and the unique solution of this problem is given by the center of the closed disk of smallest radius in the complex plane that contains $\lambda_1,\dots,\lambda_n$; see~\cite[Section~2.4]{FabLieTic10}.

If $A$ and $v_0$ are as in Lemma~\ref{lem:c1}, then $c_1$ is the center of that disk, and for the unique limit vector $v_*$ we have
$$\|Av_*\|^2-\langle v_*,Av_*\rangle^2=\|G_1v_*^{(1)}\|^2-\langle v_*^{(1)},G_1v_*^{(1)}\rangle^2=1-c_1^2,$$
so that
$$\min_{\alpha\in\R}\|A-\alpha I\|=\|A-c_1 I\|=\|Av_*-c_1v_*\|=(1-c_1^2)^{1/2}.$$

On the other hand, if $A$ and $v_0$ are as in Lemma~\ref{lem:zeroFA}, then $0$ is the center of that disk. Any $v_*\in\Sigma_*^A$ then satisfies $v_*^TAv_*=0$, which gives $\|Av_*\|^2-\langle v_*,Av_*\rangle^2=1$, and $\min_{\alpha\in\R}\|A-\alpha I\|=\|A\|=\|Av_*\|=1$.

In short, any $v_*\in\Sigma_*^A$ solves the ideal Arnoldi problem for an orthogonal matrix~$A\in\R^{n\times n}$ and $s=1$ (when $v_0$ satisfies the appropriate assumptions.) Note that this observation gives no insight into the uniqueness of the limit vectors.

\section{Concluding remarks}\label{sec:Conclusions}
In this paper we have introduced and analyzed the ACI($s$) in order to generalize the Forsythe conjecture from symmetric positive definite matrices to symmetric and nonsymmetric matrices. We were able to prove the existence of a single limit vector of the sequence $\{v_{2k}\}$ in \eqref{eqn:s1-1}--\eqref{eqn:s1-2} (the case $s=1$ and $A=A^T$), and the sequence $\{v_k\}$ in \eqref{eqn:ACI1a}--\eqref{eqn:ACI1b} (the case $s=1$ and orthogonal~$A$) when $0\notin F(A)$. Our uniqueness proof for $s=1$ and $A=A^T$ is much simpler than other previously published proofs.

In the case $s=1$, the property of monotonically increasing norms in Theorem~\ref{thm:evenodd}, and its proof based on orthogonality and the Cauchy-Schwarz inequality, appear to the be closely related to the property of monotonically decreasing residual norms in the Rayleigh quotient iteration (for symmetric matrices) as well as Ostrowski's two-sided iteration and Parlett's two-sided iteration (for general matrices). The monotonic residual norms ultimately yield the global convergence of these iterations; see, e.g.,~\cite{Par74}. Working out the exact relations between the iterations studied in the context of the Forsythe conjecture and the Rayleigh quotient based iterations remains a subject of further research.

Also, the Forsythe conjecture and its generalization to the ACI($s$) still remain largely open.

\section*{Acknowledgements} We thank G\'erard Meurant for helpful comments.

\bibliographystyle{siam}
\bibliography{forsythe}

\end{document}